\newtheorem{theorem}{Theorem}[section]
\newtheorem{lemma}[theorem]{Lemma}
\newtheorem{proposition}[theorem]{Proposition}
\newtheorem{corollary}[theorem]{Corollary}
\newtheorem{conjecture}[theorem]{Conjecture}
\theoremstyle{definition}
\newtheorem{example}[theorem]{Example}
\newtheorem{remark}[theorem]{Remark}
\newcommand{\excise}[1]{}
\newcommand{\Spec}{\operatorname{Spec}}
\newcommand{\id}{\operatorname{id}}
\newcommand{\codim}{\operatorname{codim}}
\renewcommand{\dim}{\operatorname{dim}}
\newcommand{\rk}{\operatorname{rk}}
\renewcommand{\and}{\qquad\text{and}\qquad}
\newcommand{\Hom}{\operatorname{Hom}}
\newcommand{\actson}{\reflectbox{$\righttoleftarrow$}}
\newcommand{\GIT}{/\hspace{-3pt}/}
\newcommand{\Z}{\mathbb{Z}}
\newcommand{\Q}{\mathbb{Q}}
\newcommand{\C}{\mathbb{C}}
\newcommand{\Fq}{\mathbb{F}_q}
\newcommand{\IC}{\operatorname{IC}}
\newcommand{\HH}{\operatorname{H}} 
\newcommand{\IH}{\operatorname{IH}}
\newcommand{\Fr}{\operatorname{Fr}}
\renewcommand{\a}{\alpha}
\newcommand{\Aut}{\operatorname{Aut}}
\renewcommand{\deg}{\operatorname{deg}}
\newcommand{\F}{\mathbb{F}}
\newcommand{\beq}{\begin{eqnarray*}}
\newcommand{\eeq}{\end{eqnarray*}}
\newcommand{\cO}{\mathcal{O}}
\newcommand{\Gm}{\mathbb{G}_m}
\newcommand{\SL}{\operatorname{SL}}
\begin{document}
\spacing{1.2}
\noindent{\Large\bf Intersection cohomology of Popov--Vinberg varieties
}\\

\noindent{\bf Andrew Dancer}\\
Jesus College and Mathematical Institute, University of Oxford
\vspace{.1in}

\noindent{\bf Johan Martens}\\
School of Mathematics and Maxwell Institute, University of Edinburgh
\vspace{.1in}

\noindent{\bf Nicholas Proudfoot}\footnote{Supported by NSF grant DMS-2344861.}\\
Department of Mathematics, University of Oregon\\

{\small
\begin{quote}
\noindent {\em Abstract.}  The Popov--Vinberg variety of a simply connected, split, semisimple algebraic group $G$ is a singular affine variety that contains
the basic affine space $G/U$ as a Zariski open subset.  It is defined as the spectrum of the ring of functions on $G/U$, and can also be identified
with the universal symplectic implosion for the maximal compact subgroup of $G$.  We provide a recursive procedure for computing the intersection cohomology
of this variety, with an emphasis on the case where $G = \SL_n$.
\end{quote} }

\section{Introduction}
{\bf Background.}  Let $G$ be a simply connected, split, semisimple algebraic group over a field $\mathbb{F}$, and let $U\subset G$ be a maximal unipotent subgroup.
The quotient $G/U$ is a quasi-affine variety, and if $\mathbb{F}$ has characteristic zero, its coordinate ring includes exactly one copy of every finite dimensional irreducible representation of $G$.
Despite the fact that $G/U$ is not affine, it is known as the {\bf basic affine space}; see \cite{BGGfund, BGG, BraKaz, Polishchuk, GinRic, GurKaz} for a small sample of the literature on this space.\footnote{Sometimes the word ``basic'' is replaced
by ``base'' or ``fundamental''.}
Even though $U$ is not reductive, the coordinate ring $\cO(G/U) \cong \cO(G)^U$ is finitely generated, and we may therefore consider
the GIT quotient $X_G := G\GIT U = \Spec\cO(G/U)$.   This is an affine variety that contains $G/U$ as a Zariski open subset.
It was first studied by Popov and Vinberg \cite{vinberg-popov}, so we call it the {\bf Popov--Vinberg variety} of $G$.
It is singular unless $G$ is a power of $\SL_2$, and the orbits of $G$ in $X_G$ are in bijection with parabolic subgroups that contain $U$. 

When $\mathbb{F}=\C$, the Popov--Vinberg variety is also called the {\bf universal symplectic implosion} for the maximal compact subgroup $K\subset G$. Symplectic implosion may be viewed as an abelianisation construction in symplectic geometry:
given a symplectic manifold $M$ with a Hamiltonian action of a compact group $K$, 
one produces a new space $M_{\rm impl}$ with an action of the maximal torus of $K$,
with the property that the symplectic reductions of $M$ by $K$ and $M_{\rm impl}$ by the torus agree.
The space $M_{\rm impl}$ is called the {\bf implosion} of $M$.  General implosions can be recovered from the implosion of $T^*K$ in a simple manner. (As discussed in \cite{DKM}, this can be viewed as a composition in the real version of the Moore--Tachikawa category.) For this reason, the implosion of $T^*K$ is called ``universal'', and it is isomorphic to $X_G$ as a stratified K\"ahler variety \cite{GJS}.\\
\\
{\bf Results.}  Our main result (Theorem \ref{general group}) is a recursive procedure for computing the intersection cohomology Poincar\'e polynomial of 
the Popov--Vinberg variety of a complex group.  We focus particularly on the case where $G=\SL_n$, in which case our recursion only involves
groups of the form $\SL_m$ for $m\leq n$ (Corollary \ref{second recursion}).  The only non-trivial calculation of this type that has appeared in the literature
before is that of $\IH^*(X_{\SL_3})$ \cite{HJ}; we provide an appendix that lists the intersection cohomology Poincar\'e polynomials
of $X_{\SL_n}$ for $n\leq 13$.
We also prove that the $(2i)^\text{th}$ intersection cohomology Betti number of $X_{\SL_n}$ is a polynomial in $n$ of degree at most $i/2$
(Proposition \ref{growth}), and we conjecture that this polynomial is a non-negative linear combination of binomial coefficients
(Conjecture \ref{binomial}).

We then consider the generating series $\Psi(t,u)$,
where the coefficient of $u^n$ is equal to the Hilbert series
of the $\SL_n$-equivariant intersection cohomology of $X_{\SL_n}$.
Our recursion translates
into a surprisingly simple functional equation\footnote{See Remark \ref{unique}
for a precise statement about the extent to which this equation determines $\Psi(t,u)$.} (Proposition \ref{functional}):
$$\Psi(t^{-1},u)\Psi(t,-u) = 1.$$
One reason for studying this generating series is that the direct sum over all $n$ of the equivariant intersection homology\footnote{Replacing equivariant cohomology with equivariant homology does not change the dimensions.} groups of $X_{\SL_n}$
naturally forms an algebra
with $\Psi(t,u)$ as its Hilbert series; we define this algebra and initiate its study in Section \ref{sec:rings}.\\
\\
{\bf Methods.}  Our proof of Theorem \ref{general group} proceeds by lifting the complex group to a group scheme over $\Z$, base changing to a field of positive characteristic, and employing the Grothendieck--Lefschetz trace formula
to compute the $l$-adic \'etale intersection cohomology of the intersection cohomology (IC) sheaf.  By a standard comparison result (e.g. \cite[Proposition 10.4.1(i)]{Kirwan-Woolf}), the resulting Poincar\'e
polynomial is the same as that of the topological intersection cohomology of the Popov--Vinberg variety over the complex numbers.
These methods are very close to those used to compute Poincar\'e polynomials of stalks of the IC sheaves of
Schubert varieties \cite{KL80}, toric varieties \cite{DL,Fieseler}, hypertoric varieties \cite{PW}, and arrangement Schubert varieties \cite{EPW}.
In each of those cases, the polynomials computed are examples of Kazhdan--Lusztig--Stanley (KLS) polynomials \cite{Stanley,KLS}, and this case is no different.  We are in effect computing KLS polynomials of the Boolean poset of subsets of simple roots of $G$, with respect
to an exotic rank function and kernel.  This is
not a perspective that we pursue, but it is implicit when we invoke the main result
of \cite{KLS} to prove Theorem \ref{general group}.

\vspace{\baselineskip}
\noindent
{\bf Acknowledgments.}
We thank Max Alekseyev, Vladimir Dotsenko, Anton Mellit, Hjalmar Rosengren, and Bal\'azs  Szendr{\H{o}}i for their contributions to Remark \ref{Balazs} via MathOverflow \cite{MOF};
Paul Balister for his help with Remark \ref{Stirling}; and Andr\'e Henriques for insightful comments about the ring $\widehat{R}$. NP thanks All Souls College for its hospitality during the preparation of this document.  
Finally, we thank the referees for their feedback.
For the purpose of open access, the authors have applied a CC BY public copyright licence to any author-accepted manuscript arising from this submission.

\section{The Popov--Vinberg variety}\label{sec:strat}
We fix a field $\F$.
Let $G$ be a simply connected, split, semisimple algebraic group over $\F$, let $T\subset B\subset G$ be a maximal torus and a Borel subgroup, and 
let $U := [B,B]$ be the corresponding maximal unipotent subgroup.  
Let $$X_G := \Spec\cO(G/U)$$
be the associated Popov--Vinberg variety.  The action of $G$ on $X_G$ has finitely many orbits, indexed by parabolic subgroups $B\subset P\subset G$, and the stabilizer of the orbit indexed by $P$ is equal to the commutator $[P,P]$.  Let us make this more concrete, following \cite[Section 6]{GJS}.

Let $\Lambda^* := \Hom(T,\Gm)$ be the weight lattice, and let $\Phi=\Phi^+\cup \Phi^-\subset \Lambda^*$ be the roots.
Consider the {\bf simple roots} $\{\alpha_1, \dots, \alpha_{\rk G}\}\subset\Phi$ and the {\bf fundamental weights} $\{\varpi_1,\ldots,\varpi_{\rk G}\}\subset\Lambda^*$. 
For any subset $S\subset\Delta:= \{1,\ldots,\rk G\}$, we denote by $\Phi_S\subset \Phi$ the set $\{\alpha\in\Phi \mid (\varpi_i,\alpha)=0 \ \text{for all}\ i\in S\}$.  
Consider the decomposition
$$\mathfrak{g}=\mathfrak{t}\oplus \bigoplus_{\alpha\in \Phi} \mathfrak{g}_{\alpha}$$ of the Lie algebra of $G$ into root spaces.
We have the following subgroups of $G$:
\begin{itemize}
\item A parabolic subgroup $P_S$, with Lie algebra $\mathfrak{p}_S=\mathfrak{t}\oplus \displaystyle\bigoplus_{\alpha \in \Phi_S\cup\Phi^+}\mathfrak{g}_\alpha$.
\item The unipotent radical $U_S\subset P_S$, with Lie algebra $\displaystyle\bigoplus_{\alpha\in \Phi^+\setminus \Phi^+_S}\mathfrak{g}_{\alpha}$.
\item The Levi subgroup $L_S\subset P_S$ with root system $\Phi_S$, which has the property that $P_S$ is the semidirect product of $U_S$ and $L_S$.
\item The commutator subgroup $G_S := [L_S,L_S]$.
\item The commutator subgroup $H_S := [P_S,P_S]$, which is the semidirect product of $U_S$ and $G_S$.
\end{itemize}
Note that, with our conventions, as the set $S\subset\Delta$ becomes larger, $\Phi_S$ and $P_S$
both become smaller.  In particular,
when $S=\Delta$, we have $P_\Delta = B$, $H_\Delta = U_\Delta = U$, $L_\Delta = T$, and $G_\Delta$ is the trivial group.
At the other extreme, when $S=\emptyset$, we have $G_\emptyset = H_\emptyset = L_\emptyset = P_\emptyset = G$, and $U_\emptyset$ is trivial.
The orbit of $G$ in $X_G$ corresponding to the subset $S$ is isomorphic to
$G/H_S$.  The open orbit corresponding to $S = \Delta$
is equal to $G/U$, and the closed orbit corresponding to $S=\emptyset$ is
a single point.

\begin{example}\label{Sln1}
Suppose that $G = \SL_n$ and $B\subset G$ is the subgroup of upper triangular matrices.
We have a canonical isomorphism $\Lambda^*\cong \Z^n/\Z$.
Let $e_i$ denote the image in $\Lambda^*$ of the $i^\text{th}$ standard basis vector for $\Z^n$, and let $\a_i = e_i-e_{i+1}$ be the corresponding
simple root.  

There is a bijection between compositions
(ordered partitions) of $n$ and subsets of $\Delta$
given by sending an $r$-tuple $\sigma = (\sigma_1,\ldots,\sigma_r)$ of positive integers with $\sigma_1+\cdots+\sigma_r=n$
to the subset $$S_\sigma := \{{\sigma_1}, {\sigma_1+\sigma_2}, \ldots, {\sigma_1+\cdots+\sigma_{r-1}}\}\subset\Delta.$$
Given a composition $\sigma$, we will write $P_\sigma := P_{S_\sigma}$, and similarly for $L_\sigma$, $G_\sigma$, and $H_\sigma$.
We then have the following explicit descriptions:
\begin{itemize}
\item The parabolic subgroup $P_\sigma$ consists of block upper triangular matrices of determinant 1, and has dimension $\frac{1}{2} \left( n^2 -2 + \sum_{i=1}^{r} \sigma_i^2 \right)$.
\item The unipotent radical $U_\sigma\subset P_\sigma$ consists of block upper triangular matrices whose diagonal blocks are identity matrices, and has dimension
$\frac{1}{2} \left(n^2 -\sum_{i=1}^{r} \sigma_i^2 \right)$.
\item The Levi subgroup $L_\sigma\subset P_\sigma$ consists of block diagonal matrices of determinant 1.
\item The commutator subgroup $G_\sigma := [L_\sigma,L_\sigma] \cong \SL_{\sigma_1}\times\cdots\times\SL_{\sigma_r}$ consists of block diagonal matrices
whose individual blocks each have determinant 1.
\item The commutator subgroup $H_\sigma := [P_\sigma,P_\sigma]$ consists of block upper triangular matrices whose diagonal blocks each have determinant 1.
\end{itemize}
The open orbit corresponds to the composition $\sigma = (1,\ldots,1)$, and
has dimension $\frac{1}{2}n(n+1)-1$, while the unique fixed point corresponds to the composition $\sigma=(n)$.
\end{example}

Let $V_\varpi$ be the irreducible representation of $G$ with highest weight $\varpi$, and let $v_\varpi\in V_\varpi$
be a nonzero highest weight vector.  Let
$$E_G := V_{\varpi_1}\oplus\cdots\oplus V_{\varpi_{\rk G}}.$$
For any subset $S\subset\Delta$, let $$v_S := \sum_{i\in S} v_{\varpi_i}\in E_G,$$
and note that the stabilizer of $v_S$ is equal to $H_S$.  Let $O_S := G\cdot v_S \cong G/H_S$.
We have $H_\Delta = U$, thus there is a unique $G$-invariant map from $G/U$ to $E_G$ taking the identity coset $e\cdot U\in G/U$ to the vector $v_\Delta$.  
Since $E_G$ is affine, this map factors through the Popov--Vinberg variety $X_G$.  

\begin{example}
When $G=\SL_n(\F)$, we have $V_{\varpi_i} = \bigwedge^i(\F^n)$ and $v_{\varpi_i} = e_1\wedge\cdots\wedge e_i$.
The map from $\SL_n/U$ to $E_{\SL_n}$ sends the coset of the identity to the vector
$$v_{\Delta} = e_1 + e_1\wedge e_2 + \cdots + e_1\wedge e_2 \wedge\cdots\wedge e_{n-1}.$$
\end{example}

\begin{remark}
If we wanted to work with groups that are not simply connected, we would need to replace $\Delta$
with a choice of a collection of dominant weights that generates the weight lattice.  To simplify the exposition,
we restrict our attention to simply connected groups, for which the collection of fundamental weights 
is a canonical such choice.
\end{remark}

The following theorem is proved in \cite[Theorem 6.11]{GJS}.

\begin{theorem}\label{gjs-strat}
Suppose that $\F=\C$.
The induced map from $X_G$ to $E_G$ is a closed embedding, thus we may identify $X_G$ with a closed subvariety of $E_G$.
For any subset $S\subset\Delta$, we have $v_S\in X_G\subset E$, and we have a $G$-equivariant stratification
\begin{equation}\label{stratification}X_G\; = \bigsqcup_{S\subset\Delta} O_S.\end{equation}
\end{theorem}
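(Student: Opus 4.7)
The plan has three steps.

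\textbf{Step 1 (closed embedding).} The map $G/U\to E_G$ sending $gU\mapsto g\cdot v_\Delta$ is well-defined because each highest weight vector $v_{\varpi_i}$ is $U$-fixed, and it factors through $X_G$ by affineness of $E_G$, yielding a $G$-equivariant morphism $\phi:X_G\to E_G$. To show $\phi$ is a closed embedding I would verify surjectivity of the pullback
$$\phi^*:\Sym(E_G^*)\;\longrightarrow\;\cO(G/U)\;\cong\;\bigoplus_{\lambda\text{ dominant}}V_\lambda^*.$$
In degree one, $\phi^*$ carries each summand $V_{\varpi_i}^*\subset E_G^*$ isomorphically onto the $V_{\varpi_i}^*$ factor on the right, and the Cartan product ensures that the subalgebra they generate contains every $V_\lambda^*$ for every dominant weight $\lambda$.

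\textbf{Step 2 (the points $v_S$ and their stabilizers).} The right $T$-action on $G/U$ extends to $X_G$ and, under $\phi$, scales the summand $V_{\varpi_i}$ of $E_G$ by the character $\varpi_i$. Taking the cocharacter $\lambda_S:=\sum_{i\notin S}\alpha_i^\vee$, which pairs to $0$ with $\varpi_j$ for $j\in S$ and positively otherwise, one has
$$\lim_{t\to 0}\lambda_S(t)\cdot v_\Delta \;=\; v_S,$$
so closedness of $X_G\subset E_G$ forces $v_S\in X_G$. Since the $V_{\varpi_i}$ lie in distinct direct summands, $g\in G$ fixes $v_S$ iff it fixes each $v_{\varpi_i}$ for $i\in S$; the stabilizer of the \emph{vector} $v_{\varpi_i}$ is the kernel of the character $\varpi_i:P_{\{i\}}\to\Gm$, so intersecting yields the common kernel in $P_S=\bigcap_{i\in S} P_{\{i\}}$ of the characters $\{\varpi_i\}_{i\in S}$. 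These form a basis of the character lattice of $P_S/[P_S,P_S]$, and their common kernel is therefore $[P_S,P_S]=H_S$, giving $O_S\cong G/H_S$.

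\textbf{Step 3 (exhaustion of orbits).} The orbits $O_S$ are pairwise distinct because their stabilizers have different Levi type, so the main obstacle is to show $X_G=\bigcup_S O_S$. I would attack this with the right $T$-action and a Hilbert--Mumford style retraction argument: for any $x\in X_G$, a suitable cocharacter of $T$ contracts $x$ to a point on the boundary of $\overline{G\cdot x}$, and an induction on $G\times T$-orbits (which are finite, because $\cO(X_G)$ is finitely generated and its $T$-weights lie in the finitely generated dominant cone) shows every such retraction terminates at one of the $v_S$. This is the step where the analysis is least mechanical, since one must verify that no extraneous $T$-fixed $G$-orbits appear beyond those indexed by subsets of $\Delta$.
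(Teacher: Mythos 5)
The paper does not prove this theorem internally; it is quoted directly from \cite[Theorem 6.11]{GJS}, which in turn traces back to Vinberg and Popov \cite{vinberg-popov}. So there is no in-paper argument to compare against, and your sketch is assessed on its own merits. Steps 1 and 2 are essentially sound: in degree one, $\phi^*$ hits a copy of each $V_{\varpi_i}^*$ inside $\cO(G/U)$, and the surjectivity of the Cartan product onto $V_{\sum a_i\varpi_i}^*$ then gives surjectivity of $\phi^*$, hence the closed embedding; the one-parameter limit $\lim_{t\to 0}\lambda_S(t)\cdot v_\Delta = v_S$ places each $v_S$ in the now-closed image; and identifying $\operatorname{Stab}_G(v_S)$ with $H_S=[P_S,P_S]$ is correct, granting the (simple connectedness) fact that $\{\varpi_i\}_{i\in S}$ restricts to a basis of the character lattice of $P_S/[P_S,P_S]$.

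Step 3 has a genuine gap, and you have correctly located it yourself. The retraction argument needs a finiteness statement to be well-founded, but the justification offered --- that $\cO(X_G)$ is finitely generated with $T$-weights confined to the dominant cone --- does not imply finitely many $G$-orbits; finitely generated affine $G$-varieties with bounded weight support routinely carry continuous orbit families, and indeed finiteness of orbits is part of what the theorem asserts. Note also that the right $T$-action scales the summand $V_{\varpi_i}$ of $E_G$ by $\varpi_i$, so the only $T$-fixed point of $X_G\subset E_G$ is the origin; a blind ``contract to a $T$-fixed point'' argument therefore collapses to $v_\emptyset$ without distinguishing the intermediate strata, and the $G\times T$-orbits coincide with the $G$-orbits in any case (since $t\cdot v_S$ already lies in $G\cdot v_S$). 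What is actually needed, and what Vinberg--Popov supply, is an explicit description of $X_G$ inside $E_G$ by generalized Pl\"ucker-type equations, from which one reads off that any $v=\sum v_i\in X_G$ has its nonzero components $v_i$ all highest weight vectors for a common Borel, hence $v\in G\cdot v_S$ with $S=\{i : v_i\neq 0\}$. Without that characterization of the image, your inductive retraction has no floor to land on.
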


For any subset $S\subset \Delta$, consider the Popov--Vinberg variety $X_{G_S}$ for the group $G_S$.  Since $G_S\cap U$ is a maximal unipotent subgroup of $G_S$,
there is a unique $G_S$-equivariant map $\varphi_S: X_{G_S}\to X_G$ taking the identity coset $e\cdot (G_S\cap U)$ to $v_\Delta$.
Equivalently, if we identify $X_{G_S}$ with the GIT quotient $H_S\GIT U :=\Spec(\F[H_S]^U)$, the map $\varphi_S:H_S\GIT U\to G\GIT U$
is induced by the inclusion of $H_S$ into $G$.
Furthermore, we have a map $G \times X_{G_S}\to X_G$ taking $(g,x)$ to $g\cdot \varphi_S(x)$, and this descends to a map
\begin{equation*}\label{nbhd}\psi_S:(G \times X_{G_S})/H_S\to X_G.\end{equation*}
The following lemma is proved in \cite[Lemma 6.13]{GJS}.

\begin{lemma}\label{slice}
Suppose that $\F=\C$.
The map $\psi_S$ is an open embedding with image
$$\bigsqcup_{S\subset S'\subset \Delta} O_{S'}.$$ 
The map $\varphi_S$ is a closed embedding that exhibits $X_{G_S}$ as a $G_S$-equivariant normal slice to the orbit $O_S\subset X_G$, taking the point $0\in X_{G_S}$ 
to the point $v_S\in X_G$.
\end{lemma}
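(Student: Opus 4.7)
The plan is to first give an explicit description of $\varphi_S$ using the ambient embedding of Theorem \ref{gjs-strat}, then analyze the image of $\psi_S$ orbit by orbit, and finally invoke Zariski's main theorem to conclude that $\psi_S$ is an open embedding; the normal slice assertion will then follow by combining the closed embedding of $\varphi_S$ with the tangent transversality at $v_S$.

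The key structural observation is that for each $i\in\Delta$, the $G_S$-subrepresentation of $V_{\varpi_i}$ generated by $v_{\varpi_i}$ is $V_{\varpi_i^{G_S}}$ when $i\notin S$ and is the trivial representation $\F\cdot v_{\varpi_i}$ when $i\in S$ (since $\varpi_i$ restricts trivially to $T_{G_S}$ in the latter case). This exhibits $E_{G_S}$ as a $G_S$-invariant subspace of $E_G$, and $G_S$-equivariance combined with $\varphi_S(e\cdot(G_S\cap U))=v_\Delta$ forces the formula $\varphi_S(x)=v_S+x$ under the ambient embeddings. Since $X_{G_S}$ is closed in $E_{G_S}$, this is a closed embedding, and clearly $\varphi_S(0)=v_S$. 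To make sense of $\psi_S$ one needs $\varphi_S$ to be $H_S$-equivariant, where $H_S$ acts on $X_{G_S}$ through the quotient $H_S\twoheadrightarrow G_S$; equivalently, $U_S$ must fix every vector of $V_{\varpi_i^{G_S}}\subset V_{\varpi_i}$. This holds because $U_S$ fixes $v_{\varpi_i}$ and is normalized by $L_S\supset G_S$, hence fixes the entire $L_S$-orbit of $v_{\varpi_i}$, which spans $V_{\varpi_i^{G_S}}$.

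Next I would identify the image of $\psi_S$ orbit by orbit. The $G$-orbits in $G\times_{H_S}X_{G_S}$ are $G\times_{H_S} O^{G_S}_T$ for $T\subset\Delta\setminus S$, and a direct root-theoretic computation gives $U_S\cdot H^{G_S}_T=H_{S\cup T}$, so $\psi_S$ carries this orbit isomorphically onto $G\cdot v_{S\cup T}=O_{S\cup T}$. The image is therefore $\bigsqcup_{S\subset S'\subset\Delta}O_{S'}$, which is open in $X_G$ as the complement of $\bigcup_{i\in S}\{v\in X_G:\pi_i(v)=0\}$, where $\pi_i:E_G\twoheadrightarrow V_{\varpi_i}$ is the projection; in particular $\psi_S$ is bijective on closed points.

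The principal step is to show that $\psi_S$ is an open embedding onto this open subset. Both source and target are normal irreducible varieties of dimension $|\Phi^+|+\rk G$, and $\psi_S$ restricts to an isomorphism $G\times_{H_S}(G_S/U_{G_S})\cong G/U=O_\Delta$ on the dense open stratum, so $\psi_S$ is birational. Zariski's main theorem in Grothendieck's form (EGA IV, 8.12.6) then factors $\psi_S$ as an open immersion followed by a finite morphism; the finite morphism is birational onto the normal variety $X_G$ and therefore an isomorphism. I expect this to be the main obstacle: since $v_S\in X_G$ and $0\in X_{G_S}$ are typically singular, a naive tangent-space argument for étaleness fails, and one must rely on the combination of normality, bijectivity, and birationality. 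The slice assertion then follows from the tangent decomposition $T_{v_S}O_S\subset\bigoplus_{i\in S}V_{\varpi_i}$ versus $T_{v_S}\varphi_S(X_{G_S})\subset E_{G_S}\subset\bigoplus_{i\notin S}V_{\varpi_i}$, together with the open embedding property of $\psi_S$.
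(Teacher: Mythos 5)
The paper does not give its own proof of this lemma; it cites \cite[Lemma~6.13]{GJS} and moves on, so there is no in-text argument to compare yours against. That said, your self-contained algebraic proof is essentially sound, and it is worth recording the two places where a reader would want an extra word.

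First, the argument is correct in its main steps. The identification $\varphi_S(x)=v_S+x$ under the $G_S$-equivariant embedding $E_{G_S}\hookrightarrow E_G$ (sending the highest weight vector $v_{\varpi_j^{G_S}}$ to $v_{\varpi_j}$ for $j\notin S$) does follow from uniqueness and $G_S$-equivariance on the dense orbit, and it matches the explicit formula of Example~\ref{Sln2}; the $H_S$-equivariance via $U_S\subset U$ fixing $\iota(E_{G_S})$ pointwise is exactly the right observation. The orbit computation $U_S\cdot H^{G_S}_T=H_{S\cup T}$ is a straightforward Lie-algebra check, the image is the complement of $\bigcup_{i\in S}\{\pi_i=0\}$ (hence open), and $\psi_S$ is bijective on closed points. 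So far so good.

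Second, the step you flag as ``the main obstacle'' is indeed where the weight of the argument sits, and it uses one hypothesis you invoke without justification: normality of both source and target. Zariski's main theorem turns your bijective birational $\psi_S$ into an open immersion only because $X_G$ is normal (and $G\times_{H_S}X_{G_S}$ is normal so that the factorization behaves). Normality of $X_G=\Spec\cO(G/U)$ is a genuine fact that deserves a citation or a one-line proof: it follows from smoothness of $G/U$ together with the Hartogs-type principle that the ring of global regular functions on a normal integral scheme is integrally closed in its fraction field, or one can quote \cite{vinberg-popov}; normality of the associated bundle $G\times_{H_S}X_{G_S}$ then follows because the quotient map $G\to G/H_S$ is an \'etale-locally trivial $H_S$-bundle and $X_{G_S}$ is normal by the same argument applied to $G_S$. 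Finally, your closing appeal to the tangent decomposition at $v_S$ is harmless but not really needed: the precise content of ``$\varphi_S$ exhibits $X_{G_S}$ as a $G_S$-equivariant normal slice'' is exactly that $\psi_S$ is an open immersion identifying a neighborhood of $O_S$ with the associated bundle $G\times_{H_S}X_{G_S}$, which you have already established.
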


\begin{example}\label{Sln2}
Continuing with Example \ref{Sln1}, write $X_n := X_{\SL_n}$ and $$X_\sigma := X_{(\SL_n)_{S_\sigma}} \cong X_{\SL_{\sigma_1}\times\cdots\times\SL_{\sigma_n}}
\cong X_{\sigma_1}\times\cdots \times X_{\sigma_r}.$$
The closed inclusion $\varphi_\sigma:X_\sigma\to X_n$
is induced by the block diagonal inclusion of the subgroup $G_\sigma = \SL_{\sigma_1}\times\cdots \times\SL_{\sigma_r}$ into $\SL_n$.
If we take \beq 
E_{\sigma_1} &=& \bigoplus_{i=1}^{\sigma_1-1} \textstyle\bigwedge^{\!i} \F\{e_1,\ldots,e_{\sigma_1}\}\\
E_{\sigma_2} &=& \bigoplus_{i=1}^{\sigma_2-1} \textstyle\bigwedge^{\!i} \F\{e_{\sigma_1+1},\ldots,e_{\sigma_1+\sigma_2}\}\\
&\vdots&\\
E_{\sigma_r} &=& \bigoplus_{i=1}^{\sigma_r-1} \textstyle\bigwedge^{\!i} \F\{e_{\sigma_1+\cdots+\sigma_{r-1}+1},\ldots,e_{n}\},
\eeq
then $\varphi_\sigma$ is the restriction of the map $E_{\SL_{\sigma_1}} \times\cdots\times E_{\SL_{\sigma_r}}\to E_{\SL_{n}}$ that sends $(a_1,\ldots,a_r)$ to
$$a_1 + v_{\varpi_{\sigma_1}}\wedge (1+a_2) + v_{\varpi_{\sigma_1+\sigma_2}}\wedge (1+a_3) + \cdots + v_{\varpi_{\sigma_1+\cdots+\sigma_{n-1}}}\wedge (1+a_r).$$ 
In particular the element $0\in X_\sigma$ is sent to the element $$v_{\varpi_{\sigma_1}} + v_{\varpi_{\sigma_1+\sigma_2}} + \cdots + v_{\varpi_{\sigma_1+\cdots+\sigma_{n-1}}} = v_{S_\sigma}\in X_{\SL_n}.$$
\end{example}

The following technical lemma will be needed for the proof of Theorem \ref{general group}.

\begin{lemma}\label{slice actions}
For any $G$ and any subset $S\subset \Delta$, there exists an action of $\Gm$ on $X_G$
such that the normal slice map $\varphi_S:X_{G_S}\to X_G$ is $\Gm$-equivariant with respect to the scalar action of $\Gm$ on $X_{G_S}$ and the chosen
action of $\Gm$ on $X_G$.  In particular, this action fixes the point $v_S = \varphi_S(0)\in X_G$.
\end{lemma}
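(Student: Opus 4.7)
The plan is to construct the desired $\gm$-action as right multiplication by an explicit cocharacter $\lambda:\gm\to T$ on $G/U$, extended to $X_G$, and then to verify both conditions through a weight computation.

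Since $T$ normalizes $U$, right multiplication gives a $T$-action on $G/U$ extending uniquely to $X_G = \Spec\cO(G/U)$, with the summand $V_\mu^* \subset \cO(X_G)$ sitting in right-$T$-weight $\mu$. Under the closed embedding $X_G\hookrightarrow E_G$ from Theorem~\ref{gjs-strat}, this action extends to the $T$-action on $E_G = \bigoplus V_{\varpi_i}$ that scales each fundamental summand $V_{\varpi_i}$ by the character $\varpi_i$: on $v_\Delta\in G/U$ one computes $\lambda(t)\cdot v_\Delta = \sum_i t^{\langle\lambda,\varpi_i\rangle} v_{\varpi_i}$, so both actions are $G$-equivariant on $E_G$ and agree on $v_\Delta$, hence on all of the dense subset $G\cdot v_\Delta\subset X_G$. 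Because $G$ is simply connected, $\{\varpi_i\}_{i\in\Delta}$ is a $\Z$-basis of $\Lambda^*$, so $\{\alpha_i^\vee\}_{i\in\Delta}$ is the dual $\Z$-basis of $\Lambda=\Hom(\gm,T)$. Taking
$$\lambda := \sum_{j\in\Delta\setminus S}\alpha_j^\vee$$
yields $\langle\lambda,\varpi_i\rangle = [i\notin S]$, so the resulting $\gm$-action on $E_G$ scales $V_{\varpi_i}$ trivially for $i\in S$ and by $t$ for $i\notin S$; in particular it fixes $v_S = \sum_{i\in S}v_{\varpi_i}$.

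For slice equivariance, the key observation is that $\{\alpha_j^\vee : j\notin S\}$ are exactly the simple coroots of the semisimple group $G_S$, so $\lambda$ lies in the cocharacter lattice of $T_{G_S} := T\cap G_S$. The slice $\varphi_S$ is induced by the inclusion $H_S\hookrightarrow G$ via the identification $X_{G_S} = \Spec\cO(H_S/U)$ (which follows from $U\subset H_S$ and the isomorphism $H_S/U\cong G_S/(G_S\cap U)$). Since $\lambda(t)\in T_{G_S}\subset H_S$, right multiplication by $\lambda(t)$ on $G/U$ restricts, by restriction of functions, to right multiplication by $\lambda(t)$ on $H_S/U$. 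Rerunning the analysis of the previous paragraph with $G_S$ in place of $G$: the fundamental weights of $G_S$ are indexed by $\Delta\setminus S$ and $\langle\lambda,\varpi_j^{G_S}\rangle_{G_S} = 1$ for each $j\notin S$, so the restricted action scales every $V_{\varpi_j^{G_S}}\subset E_{G_S}$ by $t$, which is exactly the scalar action on $X_{G_S}$.

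The main technical obstacle is justifying the extension of the right $T$-action on $X_G$ to the ``scaling by $\varpi_i$'' action on $E_G$; once this and its $G_S$-counterpart are established, everything else is linear algebra on the weight lattice together with the bookkeeping that right multiplication by $T_{G_S}$ is compatible with the slice inclusion, which follows immediately from $T_{G_S}\subset H_S$.
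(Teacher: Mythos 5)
Your proof is correct and follows essentially the same strategy as the paper's: construct the $\Gm$-action on $X_G$ from the cocharacter $\rho = \sum_{j\notin S}\alpha_j^\vee$ (equivalently, the unique cocharacter with $\langle\rho,\varpi_i\rangle$ equal to $0$ for $i\in S$ and $1$ for $i\notin S$), identified on $E_G$ with the action scaling each summand $V_{\varpi_i}$ by $t^{\langle\rho,\varpi_i\rangle}$, and checked to preserve $X_G$ because it agrees with right translation on the open orbit. The one place you go beyond the paper is in actually verifying the equivariance of $\varphi_S$ --- by noting that $\rho$ is a cocharacter of $T_{G_S}\subset H_S$ pairing to $1$ with every fundamental weight of $G_S$ --- a step the paper states but leaves to the reader.
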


\begin{proof}
For each cocharacter $\rho:\Gm\to T$, consider the homomorphism $\lambda_\rho:\Gm\to\Aut(E_G)$ given by letting
$\Gm$ act on $V_{\varpi_i}$ with weight $\langle \rho,\varpi_i\rangle$.  Note that this is not the same as the action
of $\Gm$ on $E_G$ given by composing $\rho$ with the action of $G$ on $E_G$, however these two actions do agree on
the highest weight vectors $v_{\varpi_i}\in E_G$, and therefore on the element $v_{S'}\in E_G$ for any $S'\subset\Delta$.
In addition, the action of $\Gm$ via $\lambda_\rho$ commutes with the action of $G$.
In particular, for any $t\in\Gm$ and $g\in G$, we have
$$\lambda_{\rho}(t)\cdot(g\cdot v_{\Delta})=g\cdot(\lambda_{\rho}(t)\cdot v_{\Delta})=g\cdot (\rho(t)\cdot v_{\Delta})=(g\, \rho(t))\cdot v_{\Delta}\in G/U\subset E_G,$$
therefore our action of $\Gm$ on $E_G$ restricts to an action on $X_G$.

We will take $\rho$ to be the unique cocharacter with the property that $\langle \rho, \varpi_i\rangle = 0$ if $i\in S$ and $1$
if $i\notin S$; such a cocharacter exists because $G$ is simply connected, and the fundamental weights therefore form
a basis for the weight lattice.  The map $\varphi_S:X_{G_S}\to X_G$ is equivariant with respect to the
action of $\Gm$ on $X_G$ given by $\lambda_\rho$.
\end{proof}

\section{Recursive formula}
We begin this section with a review of the statement from \cite{KLS} that will be our main technical
tool, providing a link between point counts over finite fields and intersection cohomology.
Let $$X = \bigsqcup_{x\in Q} V_x$$
be a stratified algebraic variety over $\F_q$, where $Q$ is a finite poset and $x\leq y$
if and only if the stratum $V_x$ is contained in the closure of $V_y$.

Next, suppose that, for each $x\in P$, we have an affine subvariety $C_x\subset X$ that is a
{\bf conical normal slice} to the stratum $V_x$.  
The phrase ``normal slice'' means that $C_x$ intersects
$V_x$ in a single point, and in a neighborhood of that point,
$X$ looks like the product of $V_x$ with $C_x$.  In particular,
we require that, for all $x\leq y\in Q$,
the restriction of the IC sheaf of $\overline V_{\!y}(\bar\F_q)$ to $C_x(\bar\F_q)$ is isomorphic to the IC sheaf of $X_{xy}(\bar\F_q)$.
The word ``conical'' means that we
have a stratification preserving action of $\Gm$ on $X$ (depending on the element $x\in Q$) that takes $C_x$ to itself
and contracts it to the point of intersection with $V_x$.  
For all $x\leq y\in Q$, let $V_{xy} := C_x\cap V_y$, and let $X_{xy} := \overline{V}_{xy}$.
Then, for all $x\leq z\in Q$, we have \cite[Lemma 3.1]{KLS}
$$X_{xz} = \bigsqcup_{x\leq y\leq z} V_{xy}.$$

Finally, suppose that we have a collection of polynomials $\{\kappa_{xy}(t)\mid x\leq y\in Q\}$
such that, for all $x\leq y\in Q$ and all $s\geq 1$, we have
$$\kappa_{xy}(q^s) = |V_{xy}(\F_{q^s})|.$$
Choose a prime $l$ that does not divide $q$, and for all $x\leq y\in Q$, let
$$f_{xy}(t) := \sum_{i\geq 0} t^i \dim \IH^{2i}_{\text{\'et}}(X_{xy}(\bar\F_q); \Q_l)$$
be the even degree $l$-adic \'etale intersection cohomology Poincar\'e polynomial of $X_{xy}(\bar\F_q)$, and let $d_{xy} = \dim X_{xy}$.

\begin{theorem}{\em \cite[Theorem 3.3]{KLS}}\label{KLS-recursion}
For all $x< z\in Q$, the $l$-adic \'etale
intersection cohomology
of $X_{xz}$ vanishes in odd degree, $\deg f_{xz}(t) < d_{xz}/2$, and 
$$t^{d_{xz}}f_{xz}(t^{-1}) = \sum_{x\leq y\leq z}\kappa_{xy}(t)f_{yz}(t).$$
\end{theorem}

\begin{remark}\label{outline}
We give a brief outline of the idea behind the proof of Theorem \ref{KLS-recursion}.
The Grothendieck--Lefschetz trace formula (an $l$-adic \'etale version of the Lefschetz fixed point theorem in topology) tells us that
$$\sum_{j\geq 0}(-1)^j \operatorname{tr}\!\Big(\Fr\actson \IH_{\text{\'et},c}^j(X_{xz}(\bar\F_q);\Q_l)\Big)
= \sum_{p\in X_{xz}(\Fq)}\sum_{j\geq 0}(-1)^j \operatorname{tr}\!\Big(\Fr^j\actson\IH^j_{\text{\'et}}(\IC_{X_{xz}(\bar\F_q),p}(\Q_l))\Big),$$
where $\IH_{\text{\'et},c}$ denotes compactly supported intersection cohomology, $\IC_{X_{xy}(\bar\F_q),p}(\Q_l)$ denotes the stalk of the IC sheaf at the point $p$ with coefficients in $\Q_l$, and $\Fr$ denotes the Frobenius automorphism.
By Poincar\'e duality, $\IH_{\text{\'et},c}^j(X_{xz}(\bar\F_q);\Q_l)$ is isomorphic to $\IH_{\text{\'et}}^{2d_{xz}-j}(X_{xz}(\bar\F_q);\Q_l)$.
For each point $p\in V_{xy}\subset X_{xz}$, we have
$$\IH^*_{\text{\'et}}(\IC_{X_{xz}(\bar\F_q),p}(\Q_l))
\cong \IH^*_{\text{\'et}}(X_{yz}(\bar\F_q);\Q_l),$$
so the right-hand side becomes
$$\sum_{x\leq y\leq z}\kappa_{xy}(q)\sum_{j\geq 0}(-1)^j \operatorname{tr}\!\Big(\Fr^j\actson\IH^j_{\text{\'et}}(X_{yz}(\bar\F_q);\Q_l)\Big).$$

Suppose that we knew that all of these intersection cohomology groups vanished in odd degree and that the Frobenius automorphism
always acts on $\IH^{2i}$ as multiplication by $q^i$.  This would allow us to express our trace formula succinctly as
$$q^{d_{xz}}f_{xz}(q^{-1}) =  \sum_{x\leq y\leq z}\kappa_{xy}(q)f_{yz}(q).$$  Moreover, if this holds with $q$ replaced
by $q^s$ for any $s\geq 1$, then it holds with $q$
replaced by a formal variable $t$.

The difficult part is showing that the intersection cohomology vanishes in odd degree and that the Frobenius acts in the prescribed way,
which is proved using a delicate induction.  This approach was employed for classical Schubert varieties in \cite{KL80}, for toric varieties
in \cite{DL,Fieseler}, for hypertoric varieties in \cite{PW}, and arrangement Schubert varieties in \cite{EPW}.
The main result of \cite{KLS} is a unification of these arguments into a machine that can be used off the shelf.
\end{remark}

We now apply Theorem \ref{KLS-recursion} in our setting.
Let $G$ be a simply connected, split, semisimple group scheme over the integers, and let $X_G$
be its associated base affine space, which is again a scheme over $\Z$.
Lemma \ref{spread} says that the main
technical results of the previous section hold over a finite field
of sufficiently large characteristic.  The lemma follows, for example, from repeated applications of \cite[Theorem 3.2.1(i)]{Poonen}.

\begin{lemma}\label{spread}
If $q$ is a power of a sufficiently large prime, then 
Theorem \ref{gjs-strat} and Lemma \ref{slice} both hold with $\C$    
replaced by the finite field $\F_q$.
\end{lemma}

Let $m_1,\ldots,m_{\rk G}$ be the exponents
of $G$, and let $$f_G(t) := (1-t^{m_1+1})\cdots (1-t^{m_{\rk G}+1}).$$
Let $d_G := \deg f_G(t) = \dim X_G$.
This polynomial will be relevant to us in two different ways.  First, we have (see e.g. \cite[Theorem 9.4.10]{carter} or \cite[\S 24.1]{malle-testerman})
\begin{equation}\label{count}|G(\mathbb{F}_q)| = (-1)^{\rk G} q^{d_G-\rk G}f_G(q)\end{equation} for any prime power $q$.
Second, we have
$$\frac{1}{f_G(t)} = \sum_{i=0}^\infty t^i \dim \HH_{G(\C)}^{2i}(\bullet; \Q),$$
where we write $\bullet$ to denote a point.
Let $$P_G(t) := \sum_{i=0}^\infty t^i \dim \IH^{2i}(X_G(\C); \Q).$$
If we choose a prime $l$ that does not divide $q$, then we may replace the topological intersection cohomology of the space $X_G(\C)$
with the $l$-adic \'etale intersection cohomology of the variety $X_G(\bar\F_q)$ without changing the Poincar\'e polynomial 
\cite[Proposition 10.4.1(i)]{Kirwan-Woolf}.

\begin{theorem}\label{general group}
The intersection cohomology of $X_G(\C)$ vanishes in odd degree.
If $G$ is not the trivial group, then\footnote{When
$G$ is the trivial group, $P_G(t) = 1$ and $d_G = 0 = \deg P_G(t)$, so this
inequality fails.}
$\deg P_G(t) < d_G/2$, and \begin{equation}\label{recurse}\frac{P_G(t)}{f_G(t)} = \sum_{S\subset \Delta}\frac{t^{d_{G_S}}P_{G_S}(t^{-1})}{f_{G_S}(t)}.\end{equation}
\end{theorem}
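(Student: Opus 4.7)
The plan is to proceed by strong induction on $\dim G$, simultaneously establishing odd vanishing of $\IH^*(X_G)$, the degree bound $\deg P_G < d_G/2$, and the recursion. The trivial group provides the base case. For the inductive step, I would reduce to positive characteristic via the standard comparison between topological and $\ell$-adic étale intersection cohomology for the IC sheaf, then apply the Grothendieck--Lefschetz trace formula to $\IC_{X_G}$ over $\F_q$ using the stratification $X_G = \bigsqcup_S O_S$ of Theorem~\ref{gjs-strat}.

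Concretely, the trace formula reads
\begin{equation*}
\sum_i (-1)^i \operatorname{tr}(\Fr, \IH^i_c(X_G)) \;=\; \sum_{S \subseteq \Delta} |O_S(\F_q)| \cdot \operatorname{tr}(\Fr, \IC_{X_G, v_S}).
\end{equation*}
The normal slice (Lemma~\ref{slice}) identifies $\IC_{X_G, v_S}$ with $\IC_{X_{G_S}, 0}$, and the contracting $\Gm$-action from Lemma~\ref{slice actions} identifies the cohomology of this stalk with $\IH^*(X_{G_S})$. For $S \neq \emptyset$ we have $\dim G_S < \dim G$, so the inductive hypothesis (together with Deligne's purity of IC) gives $\operatorname{tr}(\Fr, \IC_{v_S}) = P_{G_S}(q)$. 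For $S = \emptyset$ the stalk encodes the yet-unknown $\IH^*(X_G)$ itself, via the global scalar $\Gm$-action contracting $X_G$ to $0$.

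Next I would compute $|O_S(\F_q)| = |G(\F_q)|/|H_S(\F_q)|$ using formula~\eqref{count}, the factorization $|H_S(\F_q)| = q^{\dim U_S} \, |G_S(\F_q)|$, and the crucial dimension identity $d_G = d_{G_S} + \dim U_S + |S|$. (The latter is equivalent to $\dim G = \dim G_S + 2\dim U_S + |S|$, which follows from $\dim L_S = \dim G_S + |S|$ and the decomposition $\dim G = 2\dim U_S + \dim L_S$.) The $q$-powers collapse exactly, leaving $|O_S(\F_q)| = (-1)^{|S|} f_G(q)/f_{G_S}(q)$. The degree bound $\deg P_G < d_G/2$ then follows formally from the support condition on IC at the singleton stratum $\{0\}$ of complex codimension $d_G$.

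The principal obstacle is that $P_G$ appears on \emph{both} sides of the trace-formula identity --- as $\IH^*_c(X_G)$ on the left and as the $S = \emptyset$ stalk $\IH^*(X_G)$ on the right --- so the recursion cannot be directly read off. To close the argument I would invoke the uniqueness statement of the Kazhdan--Lusztig--Stanley machinery \cite{KLS}: a polynomial satisfying the claimed recursion together with the degree bound $\deg < d_G/2$ is uniquely determined from the lower $P_{G_S}$. Using Poincar\'e--Verdier duality $\IH^i_c(X_G) \cong \IH^{2d_G - i}(X_G)^{\vee}(-d_G)$ and pointwise purity of IC (Deligne) to guarantee pure Frobenius eigenvalues, the trace-formula identity becomes exactly the recursion \eqref{recurse}; odd vanishing for $X_G$ then propagates by induction, since the right-hand side is built from $P_{G_S}$ that are even-supported by induction and from $f_G(t)/f_{G_S}(t)$, which involves only even powers of $t$.
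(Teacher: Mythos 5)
Your proposal follows the same overall route as the paper --- stratification, Grothendieck--Lefschetz trace formula, normal slices (Lemma~\ref{slice}), the contracting $\Gm$-action (Lemma~\ref{slice actions}), and an appeal to the KLS machinery --- and the dimension bookkeeping (the identity $d_G = d_{G_S} + \dim U_S + |S|$ and the resulting $|O_S(\F_q)| = (-1)^{|S|} f_G(q)/f_{G_S}(q)$) is correct. However, the argument as written has a genuine gap, and it stems from misidentifying what \cite{KLS} is being invoked \emph{for}.

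You appeal to \cite{KLS} only for a ``uniqueness statement'': that a polynomial of degree $< d_G/2$ satisfying the recursion is determined by the $P_{G_S}$ with $S \neq \emptyset$. That part is elementary --- rearrange \eqref{recurse} to isolate $P_G(t) - t^{d_G}P_G(t^{-1})$ on one side, note the two summands are supported in disjoint degree ranges, and truncate. The genuinely hard content of \cite[Theorem 3.6]{KLS} is something else: that the \'etale intersection cohomology of $X_G$ (and of each $X_{G_S}$) is concentrated in even degrees with Frobenius acting on $\IH^{2i}$ by exactly $q^i$. Your proposal asserts this follows from ``Deligne's purity of IC'' plus the inductive hypothesis, but that overstates what purity gives you. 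Purity constrains the archimedean absolute values of the Frobenius eigenvalues on $\IH^i$ to be $q^{i/2}$; it does not say these eigenvalues are integer powers of $q$, and it does not prohibit odd-degree cohomology. Without both of those facts you cannot convert the alternating-sign trace identity into the clean polynomial identity $q^{d_G}P_G(q^{-1}) = \sum_S (\pm f_G/f_{G_S}) P_{G_S}(q)$.

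Moreover, the closing step is circular as stated. You say ``odd vanishing for $X_G$ then propagates by induction, since the right-hand side is built from $P_{G_S}$ that are even-supported by induction.'' But the $S = \emptyset$ term on the right is $\IH^*(X_G)$ itself, and the left side is $\IH^*_c(X_G)$; to pass from the trace formula to the recursion you must already know evenness and the Tate property \emph{for $X_G$}, which is the thing you are trying to deduce from the recursion. This is exactly the circularity that \cite[Theorem 3.6]{KLS} is designed to break, by a separate inductive argument that simultaneously establishes pointwise purity, evenness, the Tate property, and the degree bound (this is also what the paper flags in Remark~\ref{outline} as ``the difficult part''). The fix is the same as the paper's: verify the geometric hypotheses (which your Steps with Lemmas~\ref{slice} and~\ref{slice actions} do correctly), and then cite the full conclusion of \cite[Theorem 3.6]{KLS} --- odd vanishing, $\deg P_G < d_G/2$, and Equation~\eqref{before simplifying} all at once --- rather than attempting to re-derive the evenness and Tate properties from purity alone.
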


\begin{remark}
Before proving Theorem \ref{general group}, we explain why it allows us to
compute $P_G(t)$ recursively.  There is a slight subtlety here, since
$G_\emptyset = G$ and therefore $P_G(t)$ appears on both sides of the 
equation.  Let us rewrite this equation as
\begin{equation}\label{difference}P_G(t) - t^{d_{G}}P_{G}(t^{-1}) = \sum_{\emptyset \neq S\subset \Delta}\frac{f_G(t)\,t^{d_{G_S}}P_{G_S}(t^{-1})}{f_{G_S}(t)}.\end{equation}
Since $\deg P_G(t) < d_G/2$, the polynomial $t^{d_{G}}P_{G}(t^{-1})$
vanishes in degree less than or equal to $d_G/2$, and therefore
$P_G(t)$ can be obtained from the right hand side of the equation by truncation.
\end{remark}

\begin{proof}
Choose a prime power $q$ as in Lemma \ref{spread}.
Theorem \ref{gjs-strat} and Lemma \ref{spread} tell us that the $G$-orbits in $X_G$ are indexed by subsets $S\subset\Delta$, and the number of $\mathbb{F}_q$-points
on the orbit $O_S$ is equal to
$$|G(\mathbb{F}_q)/H_S(\mathbb{F}_q)|
= \frac{|G(\mathbb{F}_q)|}{|G_S(\mathbb{F}_q)|\cdot |U_S(\mathbb{F}_q)|}.$$
Using Equation \eqref{count} and the fact that 
$\dim U_S = (d_G - \rk G) - (d_{G_S}-\rk G_S)$, this is equal to
$$\frac{(-1)^{\rk G}f_G(q)}{(-1)^{\rk G_S} f_{G_S}(q)}.$$
Lemmas \ref{slice} and \ref{spread} tell us that $O_S$ has a normal slice isomorphic to $X_{G_S}$,
and Lemma \ref{slice actions} says that this slice is conical.

We will now apply Theorem \ref{KLS-recursion} to the variety
$X_G$, which is stratified by the Boolean poset of subsets
of $\Delta$.  We assume that $G$ is nontrivial, so that
$\emptyset$ is strictly contained in $\Delta$.
We have $(X_G)_{\emptyset\Delta} = X_G$,
and for every subset $S\subset \Delta$, we have
$V_{\emptyset S} = V_S = O_S$ and $X_{S\Delta} = C_S \cong X_{G_S}$.
More generally, for any $S\subset T\subset \Delta$,
$V_{ST}$ is isomorphic to an orbit of $G_S$ on $X_{G_S}$, and
therefore has polynomial point count.  Theorem \ref{KLS-recursion}
then tells us that the intersection cohomology of $X_G$
vanishes in odd degree, $\dim P_G(t) < d_G/2$, and
\begin{equation}\label{before simplifying}t^{d_G}P_G(t^{-1}) = \sum_{S\subset \Delta} 
\frac{(-1)^{\rk G}f_G(t)}{(-1)^{\rk G_S}f_{G_S}(t)}
P_{G_S}(t).\end{equation}
Equation \eqref{recurse} follows from this formula by replacing $t$
with $t^{-1}$ and making use of the identity
$(-1)^{\rk G} f_G(t^{-1}) = t^{-\rk G} f_G(t)$.
\end{proof}

\begin{remark}\label{categorify}
We now explain how to categorify Equation \eqref{recurse}.
Let $\IC_{X_G}$ be the intersection cohomology sheaf on $X_G(\C)$.  For each subset $S\subset\Delta$, let $\iota_S:O_S(\C)\hookrightarrow X_G(\C)$ be the inclusion of the corresponding orbit.  
The stratification of $X_G(\C)$ by orbits induces a filtration by supports on the complex of global sections of an injective resolution of $\IC_{X_G}$.  This filtered complex gives rise to a spectral sequence with
$$E_1^{p,q} = \bigoplus_{\codim O_S = p}\mathbb{H}^{p+q}_{G(\C)}(\iota_S^!\IC_{X_G}),$$
converging to $\IH^*_{G(\C)}(X_G(\C); \Q)$ \cite[Section 3.4]{BGS96}.
The hypercohomology of the complex $\iota_S^!\IC_{X_G}$ is a $G(\C)$-equivariant local system
on $O_S(\C)$ whose fiber at the point $v_S$ is equal to $\IH_c^*(X_{G_S}(\C); \Q)$, the compactly supported intersection cohomology of the normal slice.  Since the stabilizer $H_S(\C)$ is connected,
this local system is canonically trivial, and we therefore have
\beq \mathbb{H}^*_{G(\C)}(\iota_S^!\IC_{X_G}) &\cong& \HH^*_{G(\C)}(O_S(\C); \Q)\otimes \IH_c^*(X_{G_S}(\C); \Q)\\
&\cong& \HH^*_{H_S(\C)}(\bullet;\Q)\otimes \IH_c^*(X_{G_S}(\C); \Q)\\
&\cong& \HH^*_{G_S(\C)}(\bullet;\Q)\otimes \IH_c^*(X_{G_S}(\C); \Q),\eeq
where the last isomorphism follows from the fact that the inclusion of $G_S(\C)$ into $H_S(\C)$ is a homotopy
equivalence.
Returning to our spectral sequence, this means that
$$E_1^{p,q}\; \cong \bigoplus_{\substack{\codim O_S = p\\ j+k=p+q}} H^j_{G_S(\C)}(\bullet; \Q)\otimes \IH_c^k(X_{G_S}(\C); \Q).$$
The left-hand side of Equation \eqref{recurse} is the Hilbert series of the $E_\infty$-page (with respect to the total grading)
and the right-hand side is the Hilbert series of the $E_1$-page.  Thus Equation \eqref{recurse} implies
that the spectral sequence degenerates at the $E_1$-page.  In other words, taking the associated graded with respect
to the filtration of $\IH^*_{G(\C)}(X_G(\C); \Q)$ induced by the orbit stratification, we have
$$\operatorname{gr} \IH^*_{G(\C)}(X_G(\C); \Q) \cong E_\infty = E_1 = \bigoplus_S
\HH_{G_S(\C)}^*(\bullet;\Q)\otimes \IH^*_c(X_{G_S}(\C);\Q),$$ which categorifies Equation \eqref{recurse}.
\end{remark}

\begin{remark}  Turning Remark \ref{categorify} into an alternate proof of Equation \eqref{recurse} would require showing independently
that the intersection cohomology vanishes in odd degree and that the spectral sequence degenerates, 
which would require a calculation of mixed Hodge weights.  This could be achieved via an inductive argument
similar in flavor to the one referenced in Remark \ref{outline}.
\end{remark}

Section \ref{sec:A} will be devoted to understanding Equation \ref{recurse} in the case where $G=\SL_n$.
However, we will conclude this section by considering the case where $G = G_2$.

\begin{example}\label{G2}
Consider the exceptional group $G_2$, and let 
$$P_{G_2}(t) := \sum_{i\geq 0} t^i \dim \IH^{2i}(X_{G_2}(\C); \Q).$$  
We have $\Delta = \{1, 2\}$, where $\varpi_1$ is short and $\varpi_2$ is long.
The space $X_{G_2}$ has four orbits: the dense orbit $G_2/U$, the two intermediate orbits $O_{\{1\}}$ and $O_{\{2\}}$,
and the fixed point $\{v_\emptyset\}$.  
We have $G_{\{1\}}\cong G_{\{2\}}\cong \SL_2$, hence the two intermediate orbits have normal slices
isomorphic to $X_{\SL_2} \cong \mathbb{A}^2$.  We have $f_{G_2}(t) = (1-t^2)(1-t^6)$ \cite{Dickson01} and $f_{\SL_2}(t) = 1-t^2$,
so Equation \eqref{difference} says that
$$P_{G_2}(t) - t^8P_{G_2}(t^{-1}) = (1-t^2)(1-t^6)\left(2\frac{t^2}{1-t^2} + 1\right)
= 1 + t^2 - t^6-t^8.$$
Since the degree of $P_{G_2}(t)$ is strictly less than $8/2 = 4$, this implies that
$P_{G_2}(t) = 1 + t^2$.
\end{example}

\section{Type A}\label{sec:A}
In this section, we interpret Theorem \ref{general group} in the special case where $G=\SL_n$.
As in Example \ref{Sln2}, we write $X_n := X_{\SL_n}$.
Similarly, we write $d_n := d_{\SL_n} = \binom{n+1}{2}-1$, $P_n(t) := P_{\SL_n}(t)$, and
$$f_n(t) := f_{\SL_n}(t) = (1-t^2)\cdots (1-t^n).$$
For any composition $\sigma = (\sigma_1,\ldots,\sigma_r)$ of $n$,
we have $G_\sigma \cong \SL_{\sigma_1}\times\cdots\times\SL_{\sigma_r}$ and
$X_\sigma \cong X_{\sigma_1}\times\cdots\times X_{\sigma_r}$, so Equation \eqref{recurse} may be translated
as follows:
\begin{equation}\label{sln-recurse}\frac{P_n(t)}{f_n(t)} = \sum_r \sum_{\sigma_1+\cdots+\sigma_r=n}\prod_{i=1}^r \frac{t^{d_{\sigma_i}} P_{\sigma_i}(t^{-1})}{f_{\sigma_i}(t)}.\end{equation}
This recursion can be reformulated in terms of a sum with a simpler index set.

\begin{corollary}\label{second recursion}
For all positive integers $n$, we have
$$P_n(t) - t^{d_n}P_n(t^{-1}) = \sum_{s=1}^{n-1} \frac{f_n(t)}{ f_s(t)f_{n-s}(t)} t^{d_s}P_s(t^{-1}) P_{n-s}(t).$$
\end{corollary}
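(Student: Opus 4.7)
The plan is to derive the claimed identity directly from Equation \eqref{sln-recurse} by splitting each composition of $n$ according to its first part. Compositions $\sigma=(\sigma_1,\dots,\sigma_r)$ of $n$ with $r\geq 1$ naturally fall into two groups: the trivial composition $(n)$ itself (i.e.\ $r=1$), and compositions with $r\geq 2$, which are uniquely encoded by the pair $(s,\tau)$ where $s=\sigma_1\in\{1,\dots,n-1\}$ and $\tau=(\sigma_2,\dots,\sigma_r)$ is a composition of $n-s$.

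First I would isolate the $r=1$ term in the right-hand side of \eqref{sln-recurse}, which contributes exactly
\[
\frac{t^{d_n}P_n(t^{-1})}{f_n(t)}.
\]
For the remaining contributions, I would factor out the term corresponding to $\sigma_1=s$ from the product and recognize that the remaining sum over compositions $\tau$ of $n-s$ is itself the right-hand side of \eqref{sln-recurse} with $n$ replaced by $n-s$, which by \eqref{sln-recurse} equals $P_{n-s}(t)/f_{n-s}(t)$. This yields
\[
\frac{P_n(t)}{f_n(t)} \;=\; \frac{t^{d_n}P_n(t^{-1})}{f_n(t)} \;+\; \sum_{s=1}^{n-1}\frac{t^{d_s}P_s(t^{-1})}{f_s(t)}\cdot\frac{P_{n-s}(t)}{f_{n-s}(t)}.
\]

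To conclude, I would multiply both sides by $f_n(t)$ and move the $t^{d_n}P_n(t^{-1})$ term to the left, obtaining the stated formula. There is no real obstacle here: the argument is a purely combinatorial reindexing of the composition sum in \eqref{sln-recurse}, using only the fact that a nontrivial composition is determined by its first part together with a composition of the complement. The identity may be viewed as recasting the original sum (indexed by all compositions of $n$) as a convolutional recursion in which at each step one splits off a single block of size $s$.
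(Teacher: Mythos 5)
Your proof is correct and takes essentially the same approach as the paper: both isolate the trivial composition $(n)$ and reindex the remaining sum by peeling off one block of size $s$ (you use the first block $\sigma_1$, the paper uses the last block $\sigma_r$, which is an immaterial difference since the summand is symmetric in the $\sigma_i$), then recognize the inner sum as $P_{n-s}(t)/f_{n-s}(t)$ via Equation~\eqref{sln-recurse}.
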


\begin{proof}
By Equation \eqref{sln-recurse}, we have
\begin{eqnarray*}P_n(t) 
&=& f_n(t)\sum_{r=1}^n\sum_{\sigma_1+\cdots+\sigma_r=n}\prod_{i=1}^r \frac{t^{d_{\sigma_i}}P_{\sigma_i}(t^{-1})}{f_{\sigma_i}(t)}\\
&=& P_n(t) + f_n(t)\sum_{r=2}^n\sum_{\sigma_1+\cdots+\sigma_r=n} \prod_{i=1}^r \frac{t^{\sigma_i}P_{\sigma_i}(t^{-1})}{f_{\sigma_i}(t)}\\
&=& P_n(t) + f_n(t)\sum_{\sigma_r=1}^{n-1}\frac{t^{\sigma_r}P_{\sigma_r}(t^{-1})}{f_{\sigma_r}(t)}\sum_{r= 2}^n\sum_{\sigma_1+\cdots+\sigma_{r-1}=n-\sigma_r}\prod_{i=1}^{r-1}\frac{t^{d_{\sigma_i}}P_{\sigma_i}(t^{-1})}{f_{\sigma_i}(t)}\\
&=& P_n(t) + f_n(t)\sum_{\sigma_r=1}^{n-1} \frac{t^{\sigma_r}P_{\sigma_r}(t^{-1})}{f_{\sigma_r}(t)}\cdot\frac{P_{n-\sigma_r}(t)}{ f_{n-\sigma_r}(t)}.
\end{eqnarray*}
Letting $s=\sigma_r$ gives the desired formula.
\end{proof}

\begin{example}\label{SL2}
When $n=2$, Corollary \ref{second recursion} tells us that
$$P_2(t) - t^2 P_2(t^{-1}) = 1 - t^2.$$
Since we know that the degree of $P_2(t)$ is strictly less than $d_2/2 = 1$, this implies that $P_2(t) = 1$.
This is consistent with the fact that $X_2\cong\mathbb{A}^2$.
\end{example}

\begin{example}\label{SL3}
When $n=3$, Corollary \ref{second recursion} tells us that
$$P_3(t) - t^5P_3(t^{-1}) = \frac{f_3(t)}{f_1(t)f_2(t)}P_1(t^{-1})P_2(t) + \frac{t^2f_3(t)}{f_2(t)f_1(t)}P_2(t^{-1})P_1(t)  = 1 + t^2 - t^3 - t^5.$$
As we know that the degree of $P_3(t)$ is strictly less than $d_3/2 = 5/2$, this implies that we have $P_3(t) = 1 + t^2$.
This agrees with the calculation of $P_3(t)$ in \cite{HJ}.
\end{example}

\begin{example}\label{SL4}
When $n=4$, Corollary \ref{second recursion} tells us that
\beq P_4(t) - t^{9}P_4(t^{-1}) &=& 
\frac{f_4(t)}{f_1(t)f_3(t)}P_1(t^{-1})P_3(t) + \frac{t^2 f_4(t)}{f_2(t)f_2(t)}P_2(t^{-1})P_2(t) + \frac{t^5 f_4(t)}{f_3(t)f_1(t)} P_3(t^{-1})P_1(t)\\
&=& (1-t^4)(1+t^2) + t^2 (1-t^3)(1+t^2) + t^5(1-t^4)(1+t^{-2})\\
&=& 1+2t^2 + t^3 - t^6 - 2t^7-t^9.
\eeq
Since we know that the degree of $P_4(t)$ is strictly less than $d_4/2 = 9/2$, this implies that we have $P_4(t) = 1 + 2t^2 + t^3$.
See the appendix for calculations of $P_n(t)$ up to $n=13$.
\end{example}

Consider the generating series
$$\Psi(t,u) := 1 + \sum_{n=1}^\infty u^n \frac{P_n(t)}{f_n(t)}.$$
The recursion for $P_n(t)$ in Corollary \ref{second recursion} can be translated into a function equation
for $\Psi(t,u)$ as follows.

\begin{proposition}\label{functional}
We have
$\Psi(t^{-1}, u)\Psi(t,-u) = 1.$
\end{proposition}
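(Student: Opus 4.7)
The plan is to translate the recursion of Corollary \ref{second recursion} into a convolution identity that matches, coefficient by coefficient, the identity $\Psi(t^{-1},u)\Psi(t,-u)=1$.

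First, I would introduce the abbreviations $A_n(t) := P_n(t)/f_n(t)$ and $B_n(t) := t^{d_n}P_n(t^{-1})/f_n(t)$, so that $\Psi(t,u) = 1 + \sum_{n\geq 1} A_n(t)\,u^n$. Dividing both sides of Corollary \ref{second recursion} by $f_n(t)$ gives the clean convolution
$$A_n(t) - B_n(t) \;=\; \sum_{s=1}^{n-1} B_s(t)\,A_{n-s}(t), \qquad n\geq 1. \qquad (\star)$$

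Next, I would relate $A_n$ and $B_n$ under the substitution $t\mapsto t^{-1}$. Since $f_n(t)=(1-t^2)(1-t^3)\cdots(1-t^n)$ has $n-1$ factors with total degree $2+3+\cdots+n = d_n$, a direct calculation gives $f_n(t^{-1}) = (-1)^{n-1}t^{-d_n}f_n(t)$. Therefore
$$A_n(t^{-1}) \;=\; \frac{P_n(t^{-1})}{f_n(t^{-1})} \;=\; (-1)^{n-1}\,\frac{t^{d_n}P_n(t^{-1})}{f_n(t)} \;=\; (-1)^{n-1} B_n(t).$$
Consequently $\Psi(t^{-1},u) = 1 + \sum_{n\geq 1}(-1)^{n-1}B_n(t)\,u^n$, while $\Psi(t,-u) = 1 + \sum_{n\geq 1}(-1)^n A_n(t)\,u^n$.

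Finally, I would expand the product $\Psi(t^{-1},u)\Psi(t,-u)$ and read off the coefficient of $u^n$ for $n\geq 1$:
$$(-1)^{n-1}B_n(t) + (-1)^n A_n(t) + \sum_{s=1}^{n-1}(-1)^{s-1}(-1)^{n-s}B_s(t)\,A_{n-s}(t) \;=\; (-1)^{n-1}\!\left(B_n - A_n + \sum_{s=1}^{n-1}B_s A_{n-s}\right),$$
which vanishes precisely by $(\star)$. The constant term is $1\cdot 1 = 1$, so the product equals $1$, as required.

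There is no real obstacle here beyond bookkeeping; the only step that requires a moment of care is the sign/degree computation $f_n(t^{-1}) = (-1)^{n-1}t^{-d_n}f_n(t)$, which is what forces the combination $\Psi(t^{-1},u)\Psi(t,-u)$ (with the sign flip in $u$) to be the right one. Once the relation $B_n(t) = (-1)^{n-1}A_n(t^{-1})$ is in hand, matching the coefficient of $u^n$ in the product is a purely formal rearrangement of $(\star)$.
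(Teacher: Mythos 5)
Your proof is correct and is essentially the same as the paper's: both arguments rest on the identity $f_n(t^{-1}) = (-1)^{n-1}t^{-d_n}f_n(t)$ and on reading Corollary \ref{second recursion} as a convolution between the coefficient sequences of $\Psi(t^{-1},u)$ and $\Psi(t,-u)$. The only difference is presentational: the paper expands $\bigl(\Psi(t^{-1},u)-1\bigr)\bigl(\Psi(t,-u)-1\bigr)$ and resums back into $\Psi$-terms, whereas you introduce the shorthand $A_n$, $B_n$ and check the vanishing of each $u^n$-coefficient directly, which is arguably a cleaner bookkeeping of the same computation.
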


\begin{proof}
Using the fact that $(-1)^{n-1}f_n(t^{-1}) = t^{-d_n}f_n(t)$, we have
\beq
\Big(\Psi(t^{-1},u) - 1\Big)\Big(\Psi(t, -u) - 1\Big) &=& \sum_{m=1}^\infty u^m \frac{P_m(t^{-1})}{f_m(t^{-1})} \cdot\sum_{n=1}^{\infty}(-u)^n\frac{P_n(t)}{f_n(t)}\\
&=& - \sum_{m=1}^\infty (-u)^m \frac{t^{d_m}P_m(t^{-1})}{f_m(t)}\cdot \sum_{n=1}^{\infty}(-u)^n\frac{P_n(t)}{f_n(t)}\\
&=& - \sum_{k=2}^\infty \frac{(-u)^k}{f_k(t)}\sum_{m=1}^{k-1} \frac{f_k(t)}{f_{m}(t)f_{k-m}(t)}t^{d_m}P_m(t^{-1})P_{k-m}(t),
\eeq
where the last line is obtained by putting $k=m+n$.  Corollary \ref{second recursion} says that the internal sum is equal to $P_k(t) - t^{d_k}P_k(t^{-1})$,
thus we have
\beq
\Big(\Psi(t^{-1},u) - 1\Big)\Big(\Psi(t, -u) - 1\Big) &=&
\sum_{k=2}^\infty \frac{(-u)^k}{f_k(t)}t^{d_k}P_k(t^{-1})
- \sum_{k=2}^\infty \frac{(-u)^k}{f_k(t)}P_k(t)\\
&=&-\sum_{k=2}^\infty \frac{u^k}{f_k(t^{-1})}P_k(t^{-1}) - \sum_{k=2}^\infty \frac{(-u)^k}{f_k(t)}P_k(t)\\
&=&\Big(1+u-\Psi(t^{-1},u)\Big)+\Big(1-u-\Psi(t,-u)\Big)\\
&=& 2 - \Psi(t^{-1},u)- \Psi(t,-u).
\eeq
Adding $\Psi(t^{-1},u)+\Psi(t,-u) - 1$ to both sides gives the desired equation.
\end{proof}

\begin{remark}\label{unique}
The power series $\Psi(t,u)$ is uniquely characterized by Proposition \ref{functional}
along with the fact that the coefficient of $u$ is equal to 1 and, for all $n\geq 2$,
$f_n(t)$ times the coefficient of $u^n$ is equal to a polynomial in $t$ of degree strictly less than $d_n/2$.
\end{remark}

\begin{remark}\label{Balazs}
The {\bf plethystic logarithm} $$\operatorname{PLog} \Psi(t,u) = \sum_{i,n\geq 0}e(i,n)t^iu^n$$ is the power series with integer coefficients uniquely determined
by the equation $$\Psi(t,u) = \prod_{n,i\geq 0}\frac{1}{(1-t^iu^n)^{e(i,n)}}.$$
Numerical evidence strongly suggests that these coefficients are non-negative, and furthermore that there
exist polynomials $Q_n(t)$ with non-negative integer coefficients  such that
$$\operatorname{PLog} \Psi(t,u) = u + t^2 \sum_{n=2}^\infty \frac{u^n Q_n(t)}{f_n(t)}.$$
We thank Vladimir Dotsenko and Bal\'azs Szendr\H{o}i for conjecturing the non-negativity of the coefficients $e(i,n)$, Max Alekseyev
for checking this conjecture for all $i,n\leq 50$, and Anton Mellit and Hjalmar Rosengren for making the stronger conjecture, which was verified by Szendr\H{o}i up to $n=7$.
We also thank MathOverflow \cite{MOF} for providing
a forum for this discussion.
\end{remark}

\section{The Betti numbers}
In this section we study the coefficient of $t^i$ in $P_n(t)$ as a function of $n$.
Let
$$P_n(t) = \sum_{i\geq 0} c_i(n) t^i.$$
In order to make use of the recursion in Corollary \ref{second recursion}, it will also be useful to write
$$\frac{f_n(t)}{f_s(t)f_{n-s}(t)} = \sum_{i\geq 0} b_{i,s}(n) t^i.$$

\begin{lemma}\label{Gaussian}
For all $i\geq 0$ and $s\geq 1$, the function $b_{i,s}(n)$ is constant for $n\geq \max(i,1)+s$.
\end{lemma}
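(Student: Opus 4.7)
The plan is to identify $\frac{f_n(t)}{f_s(t)f_{n-s}(t)}$ with a scaled Gaussian binomial coefficient and then invoke a standard stability property of those coefficients. Recall that $f_n(t) = \prod_{k=2}^n(1-t^k) = (t;t)_n/(1-t)$, where $(t;t)_n := \prod_{k=1}^n(1-t^k)$. Substituting this into the quotient gives
\[
\frac{f_n(t)}{f_s(t)f_{n-s}(t)} \; = \; (1-t)\,\binom{n}{s}_{\!t},
\]
where $\binom{n}{s}_t := (t;t)_n/\bigl((t;t)_s (t;t)_{n-s}\bigr)$ is the Gaussian (or $q$-) binomial coefficient. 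This reduces the question to a statement about stability of the coefficients of $\binom{n}{s}_t$.

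Next, I would recall the well-known Pascal-type recursion
\[
\binom{n}{s}_{\!t} \; = \; \binom{n-1}{s}_{\!t} + t^{n-s}\binom{n-1}{s-1}_{\!t},
\]
which shows that the difference $\binom{n}{s}_t - \binom{n-1}{s}_t$ is supported in degrees $\geq n-s$. Consequently, the coefficient of $t^i$ in $\binom{n}{s}_t$ is unchanged when passing from $n-1$ to $n$ as soon as $n-s>i$, and hence stabilizes for $n \geq i+s$. (Equivalently, $[t^i]\binom{n}{s}_t$ is the number of partitions of $i$ fitting in an $s\times(n-s)$ box, which for $n-s\geq i$ reduces to the number of partitions of $i$ with at most $s$ parts.)

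Finally, since $b_{i,s}(n) = [t^i]\bigl((1-t)\binom{n}{s}_t\bigr) = [t^i]\binom{n}{s}_t - [t^{i-1}]\binom{n}{s}_t$ for $i \geq 1$, both coefficients are stable once $n \geq i+s$, giving the claimed bound $n \geq \max(i,1)+s$. For $i=0$ the second term vanishes identically and the first equals $1$ as soon as $n\geq s$, so the conclusion holds there as well (with room to spare compared to the stated bound $n \geq 1+s$). There is no real obstacle here; the only care required is tracking the two shifted coefficients in the factor $(1-t)$ and verifying the edge case $i=0$.
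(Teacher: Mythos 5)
Your argument is correct, and it routes around the paper's proof in a way worth noting. Both proofs rest on the identity
\[
\frac{f_n(t)}{f_s(t)f_{n-s}(t)} \;=\; (1-t)\binom{n}{s}_{\!t},
\]
and both invoke a $q$-Pascal rule, but you use the variant
\[
\binom{n}{s}_{\!t} \;=\; \binom{n-1}{s}_{\!t} + t^{\,n-s}\binom{n-1}{s-1}_{\!t},
\]
which immediately exhibits the successive difference $\binom{n}{s}_t - \binom{n-1}{s}_t$ as a polynomial supported in degrees $\geq n-s$. This gives stabilization of $[t^i]\binom{n}{s}_t$ for $n \geq i+s$ in one step, with no induction on $i$ or $s$ and no separate base cases. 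The paper instead uses the other Pascal form $\binom{n}{s}_q = q^s\binom{n-1}{s}_q + \binom{n-1}{s-1}_q$, which translates to the recursion $b_{i,s}(n) = b_{i-s,s}(n-1) + b_{i,s-1}(n-1)$; that recursion reduces the index $i+s$, so the paper sets up an induction on $i+s$ and has to dispose of the cases $i=0$ and $i=1$ by hand first. Your version buys a shorter, more transparent argument with the same ingredients; the paper's version buys the explicit recursion on the $b_{i,s}(n)$ themselves, which is perhaps closer in spirit to the way $b_{i,s}$ is actually used in the proof of Proposition~\ref{growth}. Your handling of the edge case $i=0$ and of the two shifted coefficients coming from the factor $(1-t)$ is careful and correct, and matches the stated bound $n \geq \max(i,1)+s$.
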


\begin{proof}
We treat the $i=0$ and $i=1$ cases separately.
We have $\frac{f_n(0)}{f_s(0)f_{n-s}(0)} = 1$, so $b_{0,s}(n) = 1$ for all $n\geq 1+s$.  
Using the Taylor series expansion of $\frac{1}{1-t^r}$,
we can see that the coefficient
of $t$ in $\frac{f_n(t)}{f_s(t)f_{n-s}(t)}$ is 0, so $b_{1,s}(n) = 0$ for all $0<s<n$.

We now proceed by induction on the quantity $i+s$.  We have
$$\frac{f_n(q)}{f_s(q)f_{n-s}(q)} = (1-q)\binom{n}{s}_{\!\!q},$$
and the well-known Gaussian binomial coefficient identity
$$\binom{n}{s}_{\!\!q} = q^s\binom{n-1}{s}_{\!\!q} + \binom{n-1}{s-1}_{\!\!q}$$ translates to the identity
$$\frac{f_n(t)}{f_s(t)f_{n-s}(t)} = t^s \frac{f_{n-1}(t)}{f_s(t)f_{n-1-s}(t)} + \frac{f_{n-1}(t)}{f_{s-1}(t)f_{n-s}(t)}.$$ 
Taking coefficients of $t^i$, this means that
$$b_{i,s}(n) = b_{i-s,s}(n-1) + b_{i,s-1}(n-1).$$
Since we have already treated the cases when $i=0$ and $i=1$, we may assume that $i\geq 2$.
This means that our inductive hypothesis implies that the right-hand side is constant for
$n-1\geq i+s-1$, or equivalently for $n\geq i+s$.
\end{proof}

\begin{proposition}\label{growth}
Fix a non-negative integer $i$.  The function $c_i(n)$ is given by a polynomial in $n$ of degree at most $i/2$ for all $n\geq i$.
\end{proposition}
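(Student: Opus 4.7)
My plan is to prove this by strong induction on $i$, extracting the coefficient of $t^i$ from the recursion in Corollary \ref{second recursion}. The base case $i = 0$ is immediate from $c_0(n) = 1$ for all $n \geq 1$. For the inductive step, fix $i \geq 1$ and assume that $c_{i'}(n)$ is given by a polynomial in $n$ of degree at most $i'/2$ for all $i' < i$ and all sufficiently large $n$.

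First I would isolate $c_i(n)$ on the left side. Since $\deg P_n(t) < d_n/2$ by Theorem \ref{general group}, the polynomial $t^{d_n} P_n(t^{-1})$ is supported in degrees strictly greater than $d_n/2$; once $n$ is large enough that $d_n > 2i$, this contributes nothing to $[t^i]$, and the left side of Corollary \ref{second recursion} reduces to $c_i(n)$. The same degree bound shows that $t^{d_s} P_s(t^{-1})$ on the right side has its lowest nonzero term in degree exceeding $d_s/2$, so the $s$-th summand cannot contribute to $[t^i]$ unless $d_s < 2i$; this restricts the relevant range to a finite set $s \in \{1, \ldots, s_0\}$ depending only on $i$.

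The $s = 1$ summand, using $f_n(t)/f_{n-1}(t) = 1 - t^n$ and $P_1(t) = 1$, simplifies to $[t^i]\bigl((1 - t^n) P_{n-1}(t)\bigr) = c_i(n-1)$ for $n > i$. For $2 \leq s \leq s_0$, I would expand the summand as
$$[t^i]\!\left(\frac{f_n(t)}{f_s(t)f_{n-s}(t)}\,t^{d_s} P_s(t^{-1}) P_{n-s}(t)\right) = \sum_{k + \ell + m = i} c_k(n - s)\, b_{\ell, s}(n)\, [t^m]\!\bigl(t^{d_s} P_s(t^{-1})\bigr),$$
where the constraint $m > d_s/2 \geq 1$ forces $m \geq 2$, hence $k \leq i - 2$. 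The factor $[t^m]\bigl(t^{d_s} P_s(t^{-1})\bigr) = c_{d_s - m}(s)$ is a fixed integer, $b_{\ell, s}(n)$ is eventually constant in $n$ by Lemma \ref{Gaussian}, and the inductive hypothesis expresses $c_k(n - s)$ as a polynomial in $n$ of degree at most $(i - 2)/2$. Summing over $s$, this shows that $c_i(n) - c_i(n-1)$ equals a polynomial $Q(n)$ of degree at most $i/2 - 1$ for all sufficiently large $n$.

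Since $\sum_m Q(m)$ is a polynomial of degree $\deg Q + 1 \leq i/2$, telescoping recovers $c_i(n)$ as a polynomial of degree at most $i/2$ for all large $n$. The main obstacle is not the structural argument but the sharp threshold $n \geq i$ asserted in the proposition: one must carefully track when each simplification (the vanishing of the $t^{d_n} P_n(t^{-1})$ contribution, the stabilization from Lemma \ref{Gaussian} which requires $n \geq \max(\ell,1) + s$, and the applicability of the inductive hypothesis to $c_k(n - s)$) first takes effect, and verify that $n = i$ already suffices in all cases. Since each of the bounding quantities satisfies $\ell \leq i$, $s \leq s_0 = O(\sqrt{i})$, and $k \leq i - 2$, this bookkeeping should be routine though tedious.
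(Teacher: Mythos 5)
Your proposal follows essentially the same route as the paper: strong induction on $i$, extraction of the coefficient of $t^i$ from Corollary~\ref{second recursion}, isolation of the $s=1$ term to produce $c_i(n-1)$, and the observation that every other term involves $c_r(n-s)$ with $r\leq i-2$, so that $c_i(n)-c_i(n-1)$ is (eventually) polynomial of degree at most $i/2-1$.  The one caution is that the ``routine though tedious'' bookkeeping you defer is exactly where the paper's proof does its real work: to obtain the sharp threshold $n\geq i$ one must show that $c_{d_s-q}(s)=0$ unless $s\leq q+1$, which is what makes Lemma~\ref{Gaussian} applicable without extra hypotheses \emph{and} guarantees $n-s\geq r$ so the inductive hypothesis applies to $c_r(n-s)$; this is a genuine argument, not just bookkeeping, so you should not wave it away.
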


\begin{proof}
We proceed by induction on $i$.  The base case is $i=0$, which holds because $c_0(n) = 1$ for all $n$.
Now fix $i>0$ and assume that the statement holds
for all $j<i$.  Our strategy will be to prove that
the quantity $c_i(n)-c_i(n-1)$ is given by a polynomial
in $n$ for all $n>i$, which implies that $c_i(n)$
is given by a polynomial in $n$ for all $n\geq i$.

Let $n>i$ be given.
By Corollary \ref{second recursion}, we have
$$c_i(n) - c_{d_n-i}(n) = \sum_{s=1}^{n-1}\sum_{\substack{p+q+r=i\\ p,q,r\geq 0}} b_{p,s}(n)c_{d_s-q}(s)c_r(n-s).$$
Since $n>i>0$, we have $c_{d_n-i}(n)=0$.
If $s>i\geq q$, then $c_{d_s-q}(s) = 0$, so we may restrict the upper bound of the outer sum to $i$, which is independent of $n$.  
Thus we have
\begin{equation}\label{ci-recursion}c_i(n) = \sum_{s=1}^{i}\sum_{\substack{p+q+r=i\\ p,q,r\geq 0}} b_{p,s}(n)c_{d_s-q}(s)c_r(n-s).\end{equation}

By Lemma \ref{Gaussian}, $b_{p,s}(n)$ is constant for $n\geq \max(p,1)+s$.
The condition that $n\geq 1+s$ is automatic from the fact that $s\leq i<n$.
If $n<p+s$, then we have $p+q+r=i<n<p+s$ with both inequalities strict.  This means that $s-q\geq 2$, which in turn implies that $c_{d_s-q}(s) = 0$.
Thus every term in our sum is equal to a constant times $c_r(n-s)$.

When $r=i$, we necessarily have $p=q=0$.  The only nonzero term of this form occurs when $s=1$, and we obtain $c_i(n-1)$.
When $r=i-1$, we either have $p=1$ and $q=0$, in which case $b_{1,s}(n)=0$ for all $s$, or $p=0$ and $q=1$, in which case $c_{d_s-1}(s) = 0$ for all $s$.
When $r\leq i-2$, our inductive hypothesis implies that $c_r(n-s)$ is given by a polynomial in $n$ of degree at most $r/2 \leq i/2 - 1$ whenever $n-s\geq r$.
We know that $n > i = p+q+r$, and therefore $n-s >p+q+r-s$.  We also know that $c_{d_s-q}(s) = 0$ unless $s\leq q+1$.  Thus, for every nonzero term,
$n-s>p+q+r-s\geq p+r-1\geq r-1$, and therefore $n-s\geq r$.

We have now shown that $c_i(n)$ is equal to $c_i(n-1)$ plus a function of $n$ that agrees with a polynomial of degree at most $i/2 - 1$ whenever $n>i$.
This implies that $c_i(n)$ is given by a polynomial in $n$ of degree at most $i/2$ whenever $n\geq i$.
\end{proof}

Since the function $c_i(n)$ takes integer values,
Proposition \ref{growth} implies that there exist integers $a_{i,k}$ for $0\leq k\leq i/2$ with the property that, for all $n\geq i$,
$$c_i(n) = \sum_{k=0}^{\lfloor i/2\rfloor} a_{i,k}\binom{n-i}{k}.$$

\begin{conjecture}\label{binomial}
The integers $a_{i,k}$ are all non-negative.
\end{conjecture}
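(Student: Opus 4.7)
My plan is to prove Conjecture~\ref{binomial} by strong induction on $i$, combining Equation \eqref{ci-recursion} with the Chu--Vandermonde convolution. The base case $i = 0$ is immediate: $c_0(n) = 1 = \binom{n}{0}$, so $a_{0,0} = 1$. For the inductive step with $i \geq 1$, I will analyze the first finite difference $c_i(n) - c_i(n-1)$ and show that it admits a non-negative expansion in the basis $\bigl\{\binom{n-i}{m}\bigr\}$; this will control all $a_{i,k}$ with $k \geq 1$, while $a_{i,0} = c_i(i)$ is a Betti number and therefore non-negative.

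In the stable range $n > i$, the $s = 1$ summand of \eqref{ci-recursion} contributes exactly $c_i(n-1)$, since $f_1(t) = 1$ gives $b_{p,1}^{\mathrm{stab}} = \delta_{p,0}$ and $d_1 = 0$ forces $q = 0$. Subtracting yields, as a polynomial identity in $n$,
$$
c_i(n) - c_i(n-1) \;=\; \sum_{s=2}^{i}\sum_{\substack{p+q+r=i\\ p,q,r \geq 0}} b_{p,s}^{\mathrm{stab}}\, c_{d_s - q}(s)\, c_r(n-s).
$$
The coefficient $b_{p,s}^{\mathrm{stab}}$, being the $t^p$-coefficient of $1/f_s(t)$, counts partitions of $p$ with parts in $\{2, \dots, s\}$ and is therefore non-negative, and $c_{d_s - q}(s)$ is a Betti number. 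The crucial inequality is that in each surviving term with $s \geq 2$, one has $q \geq s$: non-vanishing of $c_{d_s - q}(s)$ forces $q \geq d_s - \deg P_s(t)$, and the bound $\deg P_s(t) < d_s/2$ from Theorem~\ref{general group}, combined with $d_s = s(s+1)/2 - 1$, gives $d_s - \deg P_s(t) \geq \lfloor d_s/2\rfloor + 1 \geq s$ (an elementary check using $(s-1)(s-2) \geq 0$). Therefore $r = i - p - q \leq i - s$, so $i - s - r \geq 0$. The inductive hypothesis then supplies $c_r(n-s) = \sum_\ell a_{r,\ell}\binom{n-s-r}{\ell}$ with $a_{r,\ell} \geq 0$, and Chu--Vandermonde gives
$$
\binom{n-s-r}{\ell} \;=\; \sum_{m}\binom{i-s-r}{\ell-m}\binom{n-i}{m}
$$
with non-negative coefficients (because $i - s - r \geq 0$). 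Assembling these pieces produces $c_i(n) - c_i(n-1) = \sum_m A_m \binom{n-i}{m}$ with $A_m \geq 0$.

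Finally, I will translate this into a statement about $a_{i,k}$. Starting from $c_i(n) = \sum_m a_{i,m}\binom{n-i}{m}$, Pascal's rule gives $c_i(n) - c_i(n-1) = \sum_k a_{i,k+1}\binom{n-i-1}{k}$. Re-expanding the right-hand side via $\binom{n-i}{m} = \binom{n-i-1}{m} + \binom{n-i-1}{m-1}$ and matching coefficients in the basis $\bigl\{\binom{n-i-1}{k}\bigr\}$ yields $a_{i,k+1} = A_k + A_{k+1} \geq 0$, closing the induction. The step I expect to require the most care is the rigorous verification that the stable-range recursion lifts to a polynomial identity in $n$ that can be iterated in tandem with the inductive hypothesis; in particular, one must confirm that the constraint $n - s \geq r$ needed to apply the polynomial formula for $c_r(n-s)$ is automatic from $r \leq i - s$ and $n \geq i$, and carefully check that the inequality $q \geq s$ has no edge-case failures for small $s$. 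If any of these steps breaks down, natural fallback approaches would be either to categorify $a_{i,k}$ via a filtration on $\IH^{2i}(X_n(\C))$ constructed from the normal-slice embeddings of Lemma~\ref{slice}, or to derive positivity from the conjectural plethystic structure on $\Psi(t,u)$ from Remark~\ref{Balazs}.
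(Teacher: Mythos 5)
The statement you address is a \emph{conjecture} in the paper; the authors offer no proof, only numerical evidence through $i \leq 9$. So there is no ``paper's proof'' to compare against: your proposal is a genuine proof attempt for an open problem, and I have checked it carefully rather than matching it against a template.

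Having worked through the details, I cannot find a gap, and I believe your argument is correct. The decisive observation, which I have verified, is the inequality $q \geq s$ for surviving terms with $s \geq 2$. Non-vanishing of $c_{d_s-q}(s)$ forces $q > d_s/2$, hence $q \geq \lfloor d_s/2 \rfloor + 1$; and $\lfloor d_s/2 \rfloor + 1 \geq s$ reduces, via $d_s = s(s+1)/2 - 1$, to $(s-1)(s-2) \geq 0$ (with equality exactly at $s \in \{2,3\}$, which are the only tight cases and are consistent). From $q \geq s$ one gets $r \leq i - s$, so $i - s - r \geq 0$, which is precisely what makes the Chu--Vandermonde coefficients $\binom{i-s-r}{\ell - m}$ non-negative. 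The remaining concerns you flagged are also fine: the condition $n - s \geq r$ needed to invoke the inductive polynomial formula for $c_r(n-s)$ follows from $n > i \geq s + r$; the $s = 1$ summand of the paper's Equation \eqref{ci-recursion} really does contribute exactly $c_i(n-1)$ in the range $n > i$ (only $p = q = 0$, $r = i$ survives, with $b_{0,1}(n) = c_0(1) = 1$); the stabilization $b_{p,s}(n) = b_{p,s}^{\mathrm{stab}}$ on all surviving terms follows from Lemma \ref{Gaussian} once $q \geq s$ gives $n > i \geq p + s$; and the final translation $a_{i,k+1} = A_k + A_{k+1}$ via Pascal's rule is an identity of polynomials valid because both sides agree for all $n \geq i + 1$ and have bounded degree. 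Together with $a_{i,0} = c_i(i) = \dim \IH^{2i}(X_i(\C);\Q) \geq 0$, this closes the induction (with base $i = 0$, and the case $i = 1$ being the vacuous empty sum). If this argument really is as sound as it appears, it resolves Conjecture \ref{binomial} and deserves to be written up carefully as a standalone proposition; I would nonetheless advise triple-checking the tight cases $s \in \{2,3\}$ and the polynomial-identity bookkeeping once more, since the brevity of the argument is genuinely surprising for a statement the authors chose to leave open.
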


Conjecture \ref{binomial} is motivated by the following corollary of Proposition \ref{growth}, in which we compute the coefficients
$a_{i,k}$ for all $i\leq 9$.

\begin{corollary}\label{terms of small degree}
We have the following identities:
\beq
\text{for all $n\geq 0$, }\quad c_0(n) &=& 1\\
\text{for all $n\geq 1$, }\quad c_1(n) &=& 0\\
\text{for all $n\geq 2$, }\quad c_2(n) &=& n-2\\
\text{for all $n\geq 3$, }\quad c_3(n) &=& n-3\\
\text{for all $n\geq 4$, }\quad c_4(n) 
&=& \binom{n-4}{2} + 2\binom{n-4}{1}\\
\text{for all $n\geq 5$, }\quad c_5(n) 
&=&  2\binom{n-5}{2} + 4\binom{n-5}{1} + \binom{n-5}{0}\\
\text{for all $n\geq 6$, }\quad c_6(n) 
&=& \binom{n-6}{3} + 5\binom{n-6}{2} + 9\binom{n-6}{1} + 6\binom{n-6}{0}\\
\text{for all $n\geq 7$, }\quad c_7(n) 
&=& 3\binom{n-7}{3} + 12\binom{n-7}{2} + 20\binom{n-7}{1} + 15\binom{n-7}{0}\\
\text{for all $n\geq 8$, }\quad c_8(n) 
&=& \binom{n-8}{4} + 9\binom{n-8}{3} + 30\binom{n-8}{2} + 53\binom{n-8}{1} + 50\binom{n-8}{0}\\
\text{for all $n\geq 9$, }\quad c_9(n) 
&=& 4\binom{n-9}{4} + 25\binom{n-9}{3} + 73\binom{n-9}{2} + 125\binom{n-9}{1} + 123\binom{n-9}{0}.
\eeq
\end{corollary}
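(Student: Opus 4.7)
The plan is to combine Proposition \ref{growth} with explicit computation of the Poincar\'e polynomials $P_n(t)$ via the recursion of Corollary \ref{second recursion}. By Proposition \ref{growth}, for each fixed $i\leq 9$ the function $c_i(n)$ agrees, for $n\geq i$, with a polynomial in $n$ of degree at most $\lfloor i/2\rfloor$. Such a polynomial is uniquely determined by its values at the $\lfloor i/2\rfloor+1$ consecutive integers $n=i,i+1,\ldots,i+\lfloor i/2\rfloor$. For $i\leq 9$ the largest required value is $n=13$, which lies inside the range covered by the appendix.

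The first step is to compute $P_n(t)$ for $1\leq n\leq 13$. Starting from $P_1(t)=1$ and inducting on $n$, each $P_n(t)$ is extracted from the right-hand side of the formula in Corollary \ref{second recursion} by polynomial truncation, using the degree bound $\deg P_n(t)<d_n/2$, just as in Examples \ref{SL2}--\ref{SL4}. Alternatively, one may use the equivalent recursion \eqref{ci-recursion} already established in the proof of Proposition \ref{growth}. Either way, this yields the full table of integers $c_i(n)$ for $n\leq 13$ and all $i$.

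The second step is to convert the resulting data into the binomial basis. Because the basis $\{\binom{n-i}{k}\}_{k\geq 0}$ is dual to iterated forward differences at $n=i$ (i.e.\ $\Delta^{j}\binom{n-i}{k}\big|_{n=i}=\delta_{j,k}$ where $\Delta f(n):=f(n+1)-f(n)$), the coefficients appearing in
\[ c_i(n)=\sum_{k=0}^{\lfloor i/2\rfloor}a_{i,k}\binom{n-i}{k} \]
are simply $a_{i,k}=\Delta^{k}c_i(i)$. Applying this identification to the tabulated values for each $i\in\{0,1,\ldots,9\}$ produces the claimed identities. The cases $i=0$ and $i=1$ are immediate from general principles: $c_0(n)=1$ because $X_n$ is irreducible, and $c_1(n)=0$ by the odd-degree vanishing part of Theorem \ref{general group}.

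There is no conceptual obstacle beyond Proposition \ref{growth} and Corollary \ref{second recursion}; the only work is the bookkeeping of iterating the recursion up through $n=13$ and then taking forward differences. The cleanest presentation is to display the table of $c_i(n)$ implicit in the appendix and let the reader (or a short computation) verify that the forward differences at $n=i$ agree with the coefficients listed in the statement.
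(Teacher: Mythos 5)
Your proposal is correct and follows exactly the same route as the paper: Proposition~\ref{growth} pins down the degree, the appendix values of $P_n(t)$ for $n\leq 13$ supply enough data points, and polynomial interpolation (which you phrase explicitly via forward differences against the binomial basis) recovers the coefficients $a_{i,k}$. The extra detail about $\Delta^{k}c_i(i)=a_{i,k}$ and the remarks on $c_0$, $c_1$ are harmless elaborations of what the paper leaves implicit.
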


\begin{proof}
By Proposition \ref{growth}, we can compute $c_i(n)$ for $n\geq i$ by polynomial interpolation as $n$ ranges from $i$ to $\lfloor 3i/2\rfloor$.
We do this using the formulas in the appendix.
\end{proof}

\begin{example}\label{c2}
Corollary \ref{terms of small degree} says that, for all $n\geq 2$, $\dim \IH^4(X_n(\C); \Q) = c_2(n) = n-2$, or equivalently 
$\dim \IH^4_{\SL_n(\C)}(X_n(\C); \Q) = n-1$.  Indeed, $\dim \IH^4_{\SL_n(\C)}(X_n(\C); \Q)$ is equal to the coefficient of $t^2$ in the power series
$P_n(t)/f_n(t)$; Equation \eqref{sln-recurse} shows us that this coefficient is $n-1$, corresponding to the $n-1$ different compositions consisting
of a two and a bunch of ones.  At the level of vector spaces, Remark \ref{categorify} tells us that $\IH^4_{\SL_n}(X_n(\C);\Q)$ decomposes as
a direct sum of $n-1$ copies of the 1-dimensional vector space $H_{\SL_2}^0(\bullet; \Q)\otimes \IH^4_c(X_{\SL_2}(\C);\Q)$, one for each such composition.\footnote{Since
all of these terms appear in the same entry of the $E_1$-page, the filtration with respect to which we need to take the associated graded is trivial.}  Similarly,
$\dim \IH^6_{\SL_n(\C)}(X_n(\C); \Q) = n-2$, and $\IH^6_{\SL_n}(X_n(\C);\Q)$ decomposes as
a direct sum of $n-2$ copies of the 1-dimensional vector space $H_{\SL_3}^0(\bullet; \Q)\otimes \IH^6_c(X_{\SL_3}(\C);\Q)$, one for each composition consisting
of a three and a bunch of ones.
\end{example}    

\begin{proposition}
If $i$ is even, then $a_{i,i/2}=1$.
If $i$ is odd, then $a_{i,(i-1)/2} = (i-1)/2$.
\end{proposition}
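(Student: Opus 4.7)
The plan is to extract the leading coefficient $A_i := a_{i,\lfloor i/2\rfloor}$ by analyzing the top degrees in $n$ of both sides of the recursion \eqref{ci-recursion}, inducting on $i$. For $n$ large the coefficients $b_{p,s}(n)$ take their stable values by Lemma \ref{Gaussian}; I will use the basic facts $b_{0,s}(n)=1$ and $b_{1,s}(n)=0$ for $s\geq 1$, together with the explicit identity $f_n(t)/(f_1(t)f_{n-1}(t))=1-t^n$, which forces $b_{p,1}(n)=0$ for every $1\leq p<n$.

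By Proposition \ref{growth} applied inductively to $r<i$, each summand $b_{p,s}(n)\,c_{d_s-q}(s)\,c_r(n-s)$ of \eqref{ci-recursion} is a polynomial in $n$ of degree $\lfloor(i-p-q)/2\rfloor$. The main step is to enumerate the triples $(s,p,q)$ that contribute at the two highest degrees, $\lfloor i/2\rfloor$ and $\lfloor i/2\rfloor-1$. Two constraints prune the list sharply: the condition $c_{d_s-q}(s)\neq 0$ forces $q>d_s/2$ since $\deg P_s(t)<d_s/2$, giving $q\geq 2$ for all $s\geq 2$; and either $p=0$ or $p\geq 2$. The surviving contributions are $(s,p,q)=(1,0,0)$, yielding $c_i(n-1)$ at degree $\lfloor i/2\rfloor$; $(s,p,q)=(2,0,2)$, yielding $c_{i-2}(n-2)$ at degree $\lfloor i/2\rfloor-1$; and, only when $i$ is odd, $(s,p,q)=(3,0,3)$, yielding $c_{i-3}(n-3)$ at degree $\lfloor i/2\rfloor-1$ (using $c_2(3)=1$).

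Writing $c_i(n) = A_i\,n^{\lfloor i/2\rfloor}/\lfloor i/2\rfloor! + O(n^{\lfloor i/2\rfloor-1})$ and comparing leading coefficients of $c_i(n)-c_i(n-1)$ with the enumerated right-hand side will yield the two-term recursions
\[A_i = A_{i-2}\ (i\text{ even}),\qquad A_i = A_{i-2}+A_{i-3}\ (i\text{ odd}),\]
once one checks that the factorials $(\lfloor i/2\rfloor-1)!$, $\lfloor(i-2)/2\rfloor!$, and $\lfloor(i-3)/2\rfloor!$ all coincide in each case. Starting from $A_0=1$ and $A_1=0$, the even recursion immediately gives $A_i=1$; the odd recursion, seeded by $A_3=A_1+A_0=1$, gives $A_i = (i-3)/2 + 1 = (i-1)/2$ by a one-step induction on odd $i$.

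The main obstacle will be the enumeration step: verifying rigorously that no $(s,p,q)$ with $s\geq 3$ other than $(3,0,3)$ contributes at the top two degrees. This reduces to the quantitative bound $d_s - \deg P_s(t) > d_s/2$; since $d_s$ grows quadratically in $s$, a short case check handles $s=3$ explicitly and dispatches $s\geq 4$ wholesale.
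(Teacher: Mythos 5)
Your proposal is correct and follows essentially the same route as the paper: isolate the dominant terms of Equation \eqref{ci-recursion}, obtaining $c_i(n)-c_i(n-1)=c_{i-2}(n-2)+O((i-4)/2)$ for $i$ even and $c_i(n)-c_i(n-1)=c_{i-2}(n-2)+c_{i-3}(n-3)+O((i-5)/2)$ for $i$ odd, then compare leading coefficients and induct on $i$. You spell out the pruning of the $(s,p,q)$-triples, which the paper's proof leaves implicit, but the argument is the same.
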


\begin{proof}
If $i$ is even, then Equation \eqref{ci-recursion}
gives us the difference equation
$$c_i(n)-c_i(n-1) = c_{i-2}(n-2) + O((i-4)/2).$$
If $i$ is odd, we get
$$c_i(n)-c_i(n-1) = c_{i-2}(n-2) + c_{i-3}(n-3) + O((i-5)/2).$$
The proposition now follows from induction on $i$.
\end{proof}

\begin{remark}\label{Stirling}
The formulas for $c_2(n)$ and $c_3(n)$ are related to identities involving Stirling numbers. More precisely, to prove directly that $c_2(n)=n-2$, 
assume inductively that $$P_m(t)=1 +(m-2)t^2 + \cdots$$ for all $m<n$. We observe that
\[
\frac{f_n(t)}{f_{\sigma_1}(t) \ldots f_{\sigma_r}(t)} = 1 +(k_{\sigma}-1)t^2 + \ldots,
\]
where $k_{\sigma}$ is the number of $i$ such that $\sigma_i>1$.  The inductive step is equivalent to the identity
\[
n-2 = \sum_{r=2}^{n} (-1)^{r}(n-r-1) \binom{n-1}{r-1},
\]
which in turn is equivalent to the statement
\[
\sum_{k=0}^{n-1} (-1)^k k \binom{n-1}{k}=0.
\]
The left-hand side is equal to
$(-1)^{n-1} (n-1)!$ times the Stirling number of the second kind $S(1,n-1)$ \cite[Equation 1.94a]{St}, which counts the number of ways of partitioning a 1-element set into $n-1$ nonempty subsets, and therefore vanishes for $n \geq 3$.

To prove directly that $c_3(n)=n-3$, a similar induction, along with our calculation of $c_2(n)$, allow us to reduce to the identity
\begin{equation}\label{c3}\sum_{r=1}^n(-1)^r\sum_{\sigma_1+\cdots+\sigma_r=n}(r-k_\sigma)=0.\end{equation}
We are grateful to Paul Balister for explaining the following proof of Equation \eqref{c3}.
The internal sum can be rewritten as a sum over $i$ of the number of compositions $\sigma$ into $r$ parts with $\sigma_i=1$.
This in turn is equal to the sum over $k$ of the number of compositions of $k$ into $i-1$ parts times the number of compositions of $n-k-1$ into $r-i$ parts.
Letting $$A_k := \sum_{r=1}^k(-1)^r\sum_{\sigma_1+\cdots+\sigma_r=n}1,$$ we can therefore express the left-hand side of Equation \eqref{c3} as 
$-\sum_{i} A_{i-1} A_{n-i}$.  But $A_k=0$ for $k\geq 2$, so this sum vanishes for $n\geq 4$.
\end{remark}

\section{Two bigraded rings}\label{sec:rings}
In this section, $X_n$ will always mean $X_n(\C)$, and intersection homology and cohomology will always be taken with rational coefficients.
Let $m$ and $n$ be positive integers, and consider the normal slice map
$$\varphi_{m,n}:X_m\times X_n \to X_{m+n}$$ from Example \ref{Sln2} with $r=2$.
This is a normally nonsingular inclusion, and therefore induces a graded map
$$(\varphi_{m,n})_*:\IH_*(X_m)\otimes \IH_*(X_n) \to \IH_*(X_{m+n})$$
on intersection homology.  Given a third positive integer $l$, Example \ref{Sln2} shows that
$$\varphi_{l,m+n}\circ(\id_{X_l}\times\varphi_{m,n}) = \varphi_{l,m,n} = \varphi_{l+m,n}\circ(\varphi_{l,m}\times\id_{X_n}),$$
therefore our two natural maps
$$\IH_*(X_l)\otimes \IH_*(X_m)\otimes \IH_*(X_n) \to \IH_*(X_{l+m+n})$$
agree.  This in turn means that our maps define an associative, bigraded ring structure on the vector space
$$R := \Q \oplus \bigoplus_{n=1}^\infty \IH_*(X_n).$$

We can also build a version of this ring using equivariant intersection homology goups 
$$\IH_{2i}^{\SL_n}(X_n) := \IH^{2i}_{\SL_n}(X_n)^*.$$
For any $m$ and $n$, we have maps
$$\IH_*^{\SL_m}(X_m)\otimes \IH_*^{\SL_n}(X_n)\cong \IH_*^{\SL_m\times\SL_n}(X_m\times X_n)
\to \IH_*^{\SL_{m} \times\SL_{n}}(X_{m+n}) \to \IH_*^{\SL_{m+n}}(X_{m+n}),$$
where the first map is induced by the $(\SL_m\times\SL_n)$-equivariant
normally nonsingular inclusion $\varphi_{m,n}$ and the second
is induced by the inclusion of $\SL_m\times\SL_n$ into $\SL_{m+n}$.
We thus obtain an associative bigraded ring structure on
$$\widehat R := \Q \oplus \bigoplus_{n=1}^\infty \IH_*^{\SL_n}(X_n).$$

\begin{remark}
One motivation for studying the ring $\widehat R$ is that its Hilbert series is equal to the power series $\Psi(t,u)$ from Proposition \ref{functional}.
If the ring $\widehat{R}$ were commutative, then the non-negativity of the plethystic logarithm discussed in Remark \ref{Balazs} would be equivalent
to the statement that $\widehat{R}$ is an infinite polynomial ring with $e(i,n)$ generators in bidegree $(i,n)$.  However, this ring is in fact very far from
being commutative, as we can see below.
\end{remark}

While we are unable to give a complete description of either $R$ or $\widehat{R}$, it is possible to describe these rings in low homological degree.
Let $x\in \IH_0^{\SL_1}(X_1) = \IH_0(\bullet)$, $y\in \IH_4^{\SL_2}(X_2)\cong \HH_4^{\SL_2}(\bullet)$, and $z\in \IH_6^{\SL_3}(X_3)\cong\HH_6^{\SL_3}(\bullet)$ be generators
of their respective 1-dimensional vector spaces.  These classes freely generate the ring $\widehat{R}$ in homological degree at most 6:
\begin{itemize}
\item For all $n\geq 1$, $\IH_0^{\SL_n}(X_n)$ is 1-dimensional with generator $x^n$.
\item For all $n\geq 1$, $\IH_2^{\SL_n}(X_n) = 0$.
\item For all $n\geq 1$, $\IH_4^{\SL_n}(X_n)$ is $(n-1)$-dimensional with basis $\{x^{i-1}yx^{n-1-i}\mid 1\leq i\leq n-1\}$, corresponding to the decomposition described in Example \ref{c2}.
\item For all $n\geq 2$, $\IH_6^{\SL_n}(X_n)$ is $(n-2)$-dimensional with basis $\{x^{i-1}zx^{n-2-i}\mid 1\leq i\leq n-2\}$, also corresponding to the decomposition described in Example \ref{c2}.
\end{itemize}
The subring $R\subset\widehat{R}$ contains the classes $x$, $xy-yx$, and $xz-zx$, and is freely generated by these classes in homological degree at most 6.

\appendix
\section{Appendix}
Here we list the polynomials $P_n(t)$ for $n\leq 13$.
\beq
P_1(t) &=& 1\\
P_2(t) &=& 1\\
P_3(t) &=& 1 + t^2\\
P_4(t) &=& 1+ 2t^2 + t^3\\
P_5(t) &=& 1 + 3t^2 + 2t^3 + 2t^4 + t^5 + 2t^6\\
P_6(t) &=& 1 + 4t^2 + 3t^3 + 5t^4 + 5t^5 + 6t^6 + 5t^7 + 4t^8 + t^9\\
P_7(t) &=& 1 + 5t^2 + 4t^3 + 9t^4 + 11t^5 + 15t^6 + 15t^7 + 20t^8 + 13t^9  + 12t^{10} + 9t^{11} + 9t^{12} + t^{13}\\
P_8(t) &=& 1 + 6t^{2} + 5t^{3} + 14t^{4} + 19t^{5} + 29t^{6} + 35t^{7} + 50t^{8}+ 51t^{9} + 55t^{10} + 55t^{11} + 58t^{12} + 43t^{13}\\
&& +\; 38t^{14} + 30t^{15} + 16t^{16} + 5t^{17}\\
P_9(t) &=& 1 + 7t^{2} + 6t^{3} + 20t^{4} + 29t^{5} + 49t^{6} + 67t^{7} + 103t^{8} + 123t^{9} + 160t^{10} + 178t^{11} + 213t^{12}\\
&& +\; 212t^{13} + 229t^{14} + 215t^{15} + 202t^{16} + 162t^{17} + 137t^{18} + 109t^{19} + 83t^{20} + 35t^{21}\\
P_{10}(t) &=& 1 + 8t^{2} + 7t^{3} + 27t^{4} + 41t^{5} + 76t^{6} + 114t^{7} + 186t^{8} + 248t^{9} + 354t^{10} + 445t^{11} + 569t^{12}\\
&& +\; 666t^{13} + 797t^{14} + 867t^{15} + 944t^{16} + 968t^{17}  + 972t^{18} + 938t^{19} + 888t^{20} + 767t^{21}\\
&& +\; 624t^{22} + 539t^{23} + 420t^{24} + 277t^{25} + 138t^{26}\\
P_{11}(t) &=& 1 + 9t^{2} + 8t^{3} + 35t^{4} + 55t^{5} + 111t^{6} + 179t^{7} + 308t^{8} + 446t^{9} + 683t^{10} + 931t^{11}\\ 
&&  +\; 1284t^{12} + 1639t^{13} + 2131t^{14} + 2554t^{15} + 3068t^{16} + 3516t^{17} + 3978t^{18} + 4299t^{19}\\
&&  +\; 4620t^{20} + 4722t^{21} + 4738t^{22} + 4655t^{23} + 4443t^{24} + 4047t^{25} + 3552t^{26} + 2937t^{27}\\
&& +\; 2514t^{28} + 2029t^{29} + 1484t^{30} + 873t^{31} + 265t^{32}\\
P_{12}(t) &=& 1 + 10t^{2} + 9t^{3} + 44t^{4} + 71t^{5} + 155t^{6} + 265t^{7} + 479t^{8} + 742t^{9} + 1202t^{10}  + 1749t^{11}\\
&& +\; 2561t^{12} + 3511t^{13} + 4828t^{14} + 6255t^{15} + 8049t^{16} + 9969t^{17} + 12172t^{18} + 14362t^{19}\\
&& +\; 16721t^{20} + 18888t^{21} + 20965t^{22} + 22755t^{23} + 24178t^{24} + 25133t^{25} + 25498t^{26}\\
&& +\; 25195t^{27} + 24670t^{28} + 23456t^{29} + 21772t^{30} + 19414t^{31} + 16711t^{32} + 14123t^{33}\\
&& +\; 12023t^{34} + 9482t^{35} + 6833t^{36} + 4006t^{37} + 1317t^{38}\\
P_{13}(t) &=& 1 + 11t^{2} + 10t^{3} + 54t^{4} + 89t^{5} + 209t^{6} + 375t^{7} + 710t^{8} + 1165t^{9} + 1980t^{10} + 3043t^{11}\\
&& +\; 4692t^{12} + 6807t^{13} + 9838t^{14} + 13505t^{15} + 18404t^{16} + 24159t^{17} + 31296t^{18} + 39361t^{19}\\
&& +\; 48823t^{20} + 58981t^{21} + 70278t^{22} + 81886t^{23} + 93869t^{24} + 105612t^{25} + 116901t^{26}\\
&& +\; 126688t^{27} + 135618t^{28} + 142267t^{29} + 147027t^{30} + 148755t^{31} + 147909t^{32} + 144539t^{33}\\
&& +\; 139430t^{34} + 131305t^{35} + 120931t^{36} + 108095t^{37}  + 93604t^{38} + 80199t^{39} + 68481t^{40}\\
&& +\; 55663t^{41} + 42067t^{42} + 27881t^{43} + 13597t^{44}
\eeq

\def\cprime{$'$}

\end{document}